\definecolor{darkgreen}{rgb}{0,0.5,0}
\definecolor{darkred}{rgb}{0.7,0,0}
\theoremstyle{plain}
\newtheorem{lemma}{Lemma}[section]
\newtheorem{thm}[lemma]{Theorem}
\newtheorem{prop}[lemma]{Proposition}
\newtheorem{cor}[lemma]{Corollary}
\theoremstyle{definition}
\newtheorem{rmk}[lemma]{Remark}
\numberwithin{equation}{section}
\newcommand{\ck}{\ensuremath{{\mathcal K}}}
\newcommand{\pl}[2]{{\frac{\partial #1}{\partial #2}}}
\newcommand{\al}{\alpha}
\newcommand{\be}{\beta}
\newcommand{\ga}{\gamma}
\newcommand{\de}{\delta}
\newcommand{\ep}{\varepsilon}
\newcommand{\R}{\ensuremath{{\mathbb R}}}
\newcommand{\bx}{{\bf x}}
\newcommand{\by}{{\bf y}}
\newcommand{\downto}{\downarrow}
\newcommand{\upto}{\uparrow}
\newcommand{\lap}{\Delta}
\newcommand{\grad}{\nabla}
\DeclareMathOperator{\Vol}{Vol}
\DeclareMathOperator{\inj}{inj}
\newcommand{\norm}[1]{\left\Vert#1\right\Vert}  
\newcommand{\set}[1]{\left\{ #1 \right\}}
\def\blbox{\quad \vrule height7.5pt width4.17pt depth0pt}
\newcommand{\beq}{\begin{equation}}
\newcommand{\eeq}{\end{equation}}
\newcommand{\beqa}{\begin{equation}\begin{aligned}}
\newcommand{\eeqa}{\end{aligned}\end{equation}}
\newcommand{\brmk}{\begin{rmk}}
\newcommand{\ermk}{\end{rmk}}
\newcommand{\partref}[1]{\hbox{(\csname @roman\endcsname{\ref{#1}})}}
\newcommand{\half}{\frac{1}{2}}
\newcommand{\cmt}[1]{\opt{draft}{\textcolor[rgb]{0.5,0,0}{
$\LHD$ #1 $\RHD$\marginpar{\blbox}}}}
\newcommand{\Ric}{{\mathrm{Ric}}}
\newcommand{\Hess}{{\mathrm{Hess}}}
\newcommand{\pt}{\partial_t}
\newcommand{\abs}[1]{\left\vert#1\right\vert}
\title{{
\bf
Rate of curvature decay for the \\ contracting cusp Ricci flow
} 
\\ 
\cmt{DRAFT with comments}
}
\author{Peter M. Topping and Hao Yin}
\date{\today}
\begin{document}

\maketitle

\begin{abstract}
We prove that the Ricci flow that contracts a hyperbolic cusp has curvature decay $\max K\sim \frac{1}{t^2}$. In order to do this, we prove a new Li-Yau type differential Harnack inequality for Ricci flow.
\end{abstract}

\section{Introduction}
\label{intro}

Consider a Ricci flow $g(t)$ on a surface $M$, existing over a time interval $t\in (0,T)$, i.e. a smooth solution to 
\begin{equation}
\pl{g}{t}=-2\Ric_{g(t)}=-2Kg,
	\label{eqn:rf}
\end{equation}
where $K$ is the Gauss curvature.
As $t\upto T$, the curvature may blow up; for example if $g_S$ is the spherical metric on $S^2$ with $K\equiv 1$, then the solution $g(t)=(1-2t)g_S$ has curvature $K=(1-2t)^{-1}$.
Similarly, as $t\downto 0$, the curvature may also blow up. A first example of this would be if $g_H$ is a hyperbolic metric and $g(t)=2tg_H$, in which case the curvature is $K=-(2t)^{-1}$. A second example would be the natural Ricci flow smoothing out a conical surface, for which the supremum of $|K|$ also blows up like $Ct^{-1}$, with $C$ depending on the cone angle (see \cite{GHMS} or Section 4, Chapter 2 of \cite{ChowKnopf}). 

Although all of these most obvious examples have curvature blow-up like $(time)^{-1}$, Hamilton and Daskalopoulos \cite{HD} constructed examples for which the curvature blows up at time $T$ like 
$(time)^{-2}$. For example, if one takes the spherical metric on the \emph{punctured} sphere $S^2\backslash\{p\}$ with $K\equiv 1$, then the subsequent unique instantaneously complete Ricci flow
\cite{GT2, ICRF_UNIQ} will exist until time $T=1$, and the supremum of $K$ will blow up like $(1-t)^{-2}$ (up to some factor).

Later, in \cite{revcusp}, the first author constructed a new class of solutions that could be seen to have a  rate of blow-up as $t\downto 0$ that could not be $(time)^{-1}$. 
To understand one example of such a flow, consider the unique
complete hyperbolic conformal metric on $B\setminus \set{0}$, where $B\subset \R^2$ is
the unit disc, which has a hyperbolic cusp at the origin. This metric can be written 
$H=h(dx^2+dy^2)$ where
\begin{equation*}
	h(x,y)= \frac{1}{r^2(\log r)^2},
\end{equation*}
for $r=\sqrt{x^2+y^2}$, and the function $h$ restricts to an $L^1$ function on any compact subset of $B$ (i.e. the cusp has finite area). 
The arguments in \cite{revcusp} imply that other than the homothetically dilating solution $(1+2t)h$ defined on $B\setminus \set{0}$, there is an alternative complete `contracting cusp' Ricci flow solution $g_{cc}(t)$ defined on $B$, which caps the cusp off at infinity and then allows  it to contract.

\begin{thm}
\label{exist_unique}
There exists a smooth, complete Ricci flow 
$g_{cc}(t)=u_{cc}(t)(dx^2+dy^2)$ on $B$ for $t>0$ such that $g_{cc}(t)\to H$ smoothly locally on $B\backslash\{0\}$, 
and 
\beq
\label{gcc_L1}
u_{cc}(t)\to h\qquad\text{ in }L^1_{loc}(B),
\eeq
as $t\downto 0$.
Moreover, if $\tilde g(t)$ is any other smooth, complete Ricci flow on $B$, defined for 
$t\in (0,T)$, with $H$ as $L^1$ initial data in the sense of \eqref{gcc_L1}, then $g(t)=\tilde g(t)$ for all $t\in (0,T)$.
\end{thm}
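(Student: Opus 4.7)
The plan is to handle existence and uniqueness separately, reducing both to the general theory of instantaneously complete Ricci flows on surfaces developed in \cite{GT2, ICRF_UNIQ} combined with the construction in \cite{revcusp}. For existence, I would approximate the cusp conformal factor $h$ by a sequence of smooth bounded complete conformal factors $u_{0,k}$ on $B$, obtained by modifying $h$ inside $B_{1/k}(0)$ and capping off the would-be cusp (for instance by gluing in a small spherical cap of radius $1/k$). Classical short-time existence then produces smooth complete Ricci flows $g_k(t)=u_k(t)(dx^2+dy^2)$ on $B$, and the main task becomes extracting a smooth limit as $k\to\infty$ over some common time interval $(0,T)$.

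The crucial analytic step is uniform local bounds on $u_k$ on compact subsets of $B\times(0,T)$. An upper bound on $u_k$ should come from comparison with the expanding hyperbolic barrier $(1+2t)h$ on $B\setminus\{0\}$ via the maximum principle applied to $\partial_t\log u_k = u_k^{-1}\Delta\log u_k$, together with the fact that $u_{0,k}$ is a priori controlled on the capped region. A matching lower bound is more delicate and requires barriers adapted to the asymptotic cusp geometry, essentially the content of \cite{revcusp}. Once uniform bounds are in place, parabolic regularity provides higher-derivative control, yielding a subsequential smooth limit $u_{cc}$ on compact subsets of $B\times(0,T)$ and defining the candidate flow $g_{cc}(t)=u_{cc}(t)(dx^2+dy^2)$.

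To verify the initial condition, I would combine the $L^1_{loc}(B)$ convergence $u_{0,k}\to h$ built into the construction with the integral identity $\tfrac{d}{dt}\int_\Omega u_k\,dx = -2\int_\Omega K_k u_k\,dx$ on relatively compact $\Omega\subset B$, using Gauss--Bonnet-type control of the integrated curvature, to deduce \eqref{gcc_L1}. The smooth local convergence $g_{cc}(t)\to H$ on $B\setminus\{0\}$ should then follow from stability of smooth Ricci flows under smooth perturbations of initial data, applied in regions bounded away from the origin where $h$ is smooth and non-degenerate. For uniqueness, any competing smooth complete Ricci flow $\tilde g(t)$ attaining $H$ as $L^1$ initial data in the sense of \eqref{gcc_L1} is in particular an instantaneously complete Ricci flow with prescribed $L^1_{loc}$ initial conformal factor, so the uniqueness theorem of \cite{ICRF_UNIQ} applies and forces $\tilde g(t)=g_{cc}(t)$ on $(0,T)$.

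The main obstacle will be the lower bound on $u_k$ near the origin: preventing the approximating flows from collapsing there as $k\to\infty$ requires barriers sensitive to the precise asymptotic geometry of the contracting cusp, which is the geometric heart of the construction in \cite{revcusp}. Any self-contained proof must therefore either invoke that construction directly or reproduce the relevant barrier argument, and the rest of the argument is then essentially a matter of combining standard parabolic estimates with the instantaneously-complete uniqueness theory.
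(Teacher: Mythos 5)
There are two genuine gaps. First, you have the hard direction of the a priori bounds backwards. The lower bound on the approximating conformal factors is the easy part: the approximations can be chosen to lie above a constant (the paper arranges $u_{0,k}\geq e^2$), and the comparison principle for maximally stretched/complete flows then gives $u_k(t)\geq e^2$ for all time, which is all that parabolic regularity needs from below. The genuinely delicate point is the \emph{upper} bound near the origin, uniform in $k$, for $t>0$. Your proposed mechanism fails there: the barrier $(1+2t)h$ is infinite at the origin and so gives no information on any neighbourhood of $0$, while the ``a priori control of $u_{0,k}$ on the capped region'' degenerates as $k\to\infty$ (the caps must climb to height $\sim h(1/k)\to\infty$). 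Without a uniform interior upper bound for positive times the limit could be identically $+\infty$ near the origin. The paper closes exactly this gap with the new $L^1$--$L^\infty$ smoothing estimate of \cite{TY} (Theorem \ref{L1Linfthm}), which converts the smallness of $\|(u_0-\al\tilde h)_+\|_{L^1}$ into the bound $\log u_k(t)\leq 2/t+C$ on $B_{1/2}$; this is the essential new ingredient and is absent from your argument. (The pseudolocality-based bounds of \cite{revcusp}, which you gesture at, were used there for this same upper bound, not for the lower bound.)

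Second, the uniqueness step does not follow by ``applying the uniqueness theorem of \cite{ICRF_UNIQ}'' as stated. That theorem concerns instantaneously complete flows attaining a fixed \emph{smooth} initial metric; here the initial condition is only the $L^1_{loc}$ convergence \eqref{gcc_L1} to the singular factor $h$, and the notion of attainment is decisive: the expanding flow $(1+2t)H$ on $B\setminus\{0\}$ and $g_{cc}$ both emanate from $H$ in weaker senses, so one cannot quote a smooth-initial-data uniqueness statement off the shelf. The paper instead extracts an $L^1$-contraction estimate (Lemma 5.1 of \cite{ICRF_UNIQ}, restated as Lemma \ref{even_stronger_again_lemma}) comparing a complete flow on $B$ with an arbitrary flow on a slightly smaller disc, and lets the annular parameters tend to $1$ to conclude that two complete flows on $B$ whose difference tends to $0$ in $L^1_{loc}$ as $t\downto 0$ must coincide (Corollary \ref{uniq_cor}). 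Your proposal needs this quantitative $L^1$ comparison argument, or an equivalent, to make the uniqueness claim rigorous. The remaining items in your outline (choice of approximating caps, parabolic regularity, smooth convergence away from the origin, and the verification of \eqref{gcc_L1} via the sandwich $u_k(t)\leq u_{cc}(t)\leq(1+2t)h$ rather than your Gauss--Bonnet identity) are reasonable and broadly parallel to the paper.
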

The uniqueness assertion is a little different to that in \cite{revcusp}, and will be proved in Section \ref{unique_sect}. Moreover, the new $L^1-L^\infty$ smoothing estimate developed in \cite{TY} will allow us to give a streamlined proof of the existence, in Section \ref{sec:existence}, with some additional control.

It was conjectured in \cite{revcusp} that the curvature should blow up like $t^{-2}$, analogous to the result of Hamilton and Daskalopoulos, and it is this conjecture that we settle in this paper.  

\begin{thm}
\label{mainthm}
For some universal number $c_2>0$ and any $c_1>32$, if $K_{cc}$ is the Gauss curvature of $g_{cc}$, then 
$$\frac{1}{c_1 t^2}\leq \max_B K_{cc}(t)\leq \frac{c_2}{t^2}$$
for sufficiently small $t>0$ depending only on $c_1$.
\end{thm}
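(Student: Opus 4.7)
The plan is to treat the two inequalities separately; the upper bound is the substantive direction and relies on the new Li--Yau type differential Harnack inequality promised in the abstract, while the lower bound is extracted by a more direct geometric/topological argument.

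For the upper bound $\max_B K_{cc}(t) \leq c_2/t^2$, the strategy is to use the new Harnack inequality to spread the pointwise information ``$K_{cc}$ is very large at $(x_0,t_0)$'' over a definite parabolic neighborhood reaching back to an earlier time. Suppose $K_{cc}(x_0,t_0) = M$; then the Harnack forces $K_{cc}$ to be comparable to $M$ (in a scale-invariant sense) on some spacetime region below $(x_0,t_0)$. Integrating the area evolution identity $\pt(\text{area}) = -\int K_{cc}\, dA$ against this region converts the single-point largeness into a definite amount of area destroyed over an earlier time interval. The $L^1$ convergence $u_{cc}(t) \to h$ from Theorem~\ref{exist_unique}, combined with $h \in L^1_{\mathrm{loc}}(B)$, controls the total area of $g_{cc}(t)$ on any fixed small Euclidean ball around the origin uniformly as $t \downto 0$. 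Balancing the amount of area that has to disappear against what is actually available yields the bound $M t_0^2 \leq c_2$.

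For the lower bound $\max_B K_{cc}(t) \geq 1/(c_1 t^2)$ with any $c_1 > 32$, the argument is essentially Gauss--Bonnet combined with area asymptotics. The metric $g_{cc}(t)$ is smooth on the \emph{full} disc $B$, whereas the limit $H$ only makes sense on $B\setminus\{0\}$ and has a cusp there, so near the origin $g_{cc}(t)$ caps the cusp with a smoothly embedded disc carrying nontrivial positive integrated curvature. Applied to a carefully chosen geodesic disc enclosing the cap, with boundary length and geodesic curvature controlled using the known asymptotics of $h(x,y) = r^{-2}(\log r)^{-2}$ on the exterior where $g_{cc}(t)$ is already close to $H$, Gauss--Bonnet forces $\int_{\mathrm{cap}} K_{cc}\, dA$ to be close to $2\pi$. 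On the other hand, the $L^1$ convergence of $u_{cc}(t)$ to $h$, together with the explicit cusp-area formula $\int_{\{r<\rho\}} h\,dx\,dy = 2\pi/|\log\rho|$, forces the cap to have area at most a computable multiple of $t^2$. Dividing the two and tracking constants gives the lower bound, with the value $32$ emerging from the explicit form of $h$ and the choice of threshold defining the cap.

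The principal obstacle is the Harnack inequality. The classical Li--Yau--Hamilton Harnack on surfaces requires $K \geq 0$, but here $K_{cc}$ is strongly negative away from the origin (approaching the hyperbolic value $-1$) and only positive on the small cap, so any useful Harnack must avoid a global sign hypothesis and must remain effective as $t \downto 0$, a regime in which the solution is losing regularity. Once such a Harnack is in hand, the area-loss versus finite-area comparison underlying the upper bound is conceptually clean, and the explicit constant $32$ in the lower bound reduces to an asymptotic calculation in the cusp coordinates.
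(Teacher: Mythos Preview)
Your proposal has genuine gaps in both directions, and neither argument follows the paper's route.

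\textbf{Upper bound.} The area-destruction idea does not deliver the scaling you claim. A Harnack inequality of Li--Yau type propagates a large value of $K$ \emph{forward} in time, and even then only over a region of $g_{cc}$-area comparable to $1/M$ (parabolic scaling). Integrating $\partial_t(\text{area})=-2\int K\,dA$ over such a region for a time interval of order $t_0$ destroys area of order $t_0$, not $Mt_0^2$; balancing against the finite cusp area gives nothing. In fact Gauss--Bonnet already pins $\int_{B_\rho}K\,dA$ near $2\pi$ at every time, so area-loss arguments cannot see the pointwise size of $K$. The paper does something quite different: it first proves a sharp \emph{pointwise} bound on the conformal factor, $v_{cc}\le 1/t+C$, via an $L^1$--$L^\infty$ smoothing estimate (Theorem~\ref{secondthm}), and then establishes a Harnack estimate of the form $K\le C\,v/t$ directly (Theorem~\ref{thm:liyau}). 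Combining these gives $K\le C/t^2$. The Harnack is not used to spread curvature in spacetime; it converts the conformal-factor bound into a curvature bound.

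\textbf{Lower bound.} Your claim that the cap has $g_{cc}$-area at most $Ct^2$ does not follow from $L^1$ convergence and the cusp-area formula. Since $u_{cc}(t)\le(1+2t)h$, any disc $\{r<\rho\}$ has $g_{cc}$-area at most $(1+2t)\cdot 2\pi/|\log\rho|$; with the natural choice $\rho\sim e^{-c/t}$ (where $u_{cc}$ begins to deviate from $h$) this is of order $t$, not $t^2$. Your Gauss--Bonnet estimate $\int_{\text{cap}}K\,dA\approx 2\pi$ then yields only $\max K\gtrsim 1/t$. To reach $1/t^2$ one must know that the positively curved region is much smaller---essentially a sphere of curvature $\sim 1/t^2$---but that is what you are trying to prove. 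The paper instead first obtains the sharp lower bound $v_{cc}(0,t)\ge 1/(8t)+O(\log t)$ by comparison with an explicit one-parameter family of cigar solitons tangent to the cusp (Section~\ref{sec:approx}), and then uses a barrier argument: it slides a family of constant-curvature spheres $u_{\beta,K_0}$, each tangent to $\alpha^2 h$ from below, down onto $u_{cc}(t)$. At the first touching point the comparison principle gives $K_{cc}\ge K_0$, and an explicit formula yields $K_0\ge (2/\alpha)\big((\log\beta-\log 2\alpha)^2-1\big)$ with $\log\beta\ge \mu/t$; the constant $32$ arises precisely from $2\mu^2/\alpha$ with $\mu<1/8$, $\alpha>1$, not from any cap-area threshold.
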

Although rather standard, as we discuss in Section \ref{not_bad_at_infinity}, it is worth recording that the curvature blow up above is happening asymptotically at the origin in the following sense.
\begin{prop}
\label{not_bad_at_infinity_prop}
For each $\ep\in (0,1)$, we have $K_{cc}(t)\to -1$ uniformly on $B\backslash B_\ep$ as $t\downto 0$.
\end{prop}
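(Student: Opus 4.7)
The plan is to upgrade the smooth local convergence $g_{cc}(t)\to H$ on $B\setminus\{0\}$ provided by Theorem \ref{exist_unique} to uniform convergence of the Gauss curvature on $B\setminus B_\ep$. Writing $g_{cc}(t)=u_{cc}(t)(dx^2+dy^2)$ and $H=h(dx^2+dy^2)$, the Gauss curvature of any such conformal metric is $K=-\tfrac{1}{2u}\Delta\log u$ with $\Delta$ the Euclidean Laplacian. Since $u_{cc}(t)\to h$ in $C^\infty_{loc}(B\setminus\{0\})$ and $h$ itself satisfies $-\tfrac{1}{2h}\Delta\log h\equiv -1$, one obtains uniform convergence $K_{cc}(t)\to -1$ on every compact subset of $B\setminus\{0\}$. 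In particular this already gives the conclusion on any closed annulus $\{\ep\leq|x|\leq R\}$ with $R<1$.

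To extend the uniform convergence all the way to the conformal boundary $|x|=1$, I would compare $g_{cc}$ with the explicit expanding Ricci flow $\bar g(t):=(1+2t)H$ on $B\setminus\{0\}$, whose Gauss curvature is the constant $-\tfrac{1}{1+2t}\to -1$. On a thin collar $\{R<|x|<1\}$ with $R$ close to $1$, a parabolic maximum principle applied to $v:=\log(u_{cc}/\bar u)$ --- whose evolution is a linear parabolic equation with bounded coefficients once the interior step above is invoked to control $v$ on $\{|x|=R\}$ --- pins $u_{cc}$ to $\bar u$ uniformly, and Schauder theory upgrades this to $C^2$-control sufficient to transfer the bound to $K_{cc}$. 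Alternatively, one could apply Perelman-style pseudolocality at points with $|x|$ close to $1$, using that $H$ is asymptotically hyperbolic there.

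The main obstacle is precisely this second step: smooth local convergence gives nothing on a neighbourhood of $|x|=1$, so one genuinely needs a PDE barrier or pseudolocality input to handle the end of $B$. Since the model $H$ is asymptotically hyperbolic at $|x|=1$ with $K\equiv -1$, one expects $g_{cc}$ to inherit this behaviour for small $t$, making the collar argument routine --- consistent with the paper's description of the proposition as \emph{rather standard}.
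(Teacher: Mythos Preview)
Your decomposition into a compact annulus $\{\ep\le |x|\le R\}$ (handled by the smooth local convergence $u_{cc}(t)\to h$ from Theorem~\ref{exist_unique}) plus a collar near $|x|=1$ is sound, and your observation that the second region needs a genuinely separate argument is exactly right. The paper, however, takes a different and more uniform route: rather than splitting, it applies B.-L.~Chen's local curvature estimate (Proposition~\ref{BLChenProp}) at every $x_0\in B\setminus B_{\ep/2}$, with radius $R$ chosen from the hyperbolic injectivity radius. Because $g_{cc}(t)$ is complete on $B$, geodesic balls are automatically compactly contained even for $x_0$ near $|x|=1$, so Chen's result yields a curvature bound on a time interval depending only on $\ep$, uniformly over the whole annulus; Shi's estimates then control time derivatives of $K_{cc}$, and since $K_{cc}$ starts at $-1$ the conclusion follows. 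This avoids any collar-versus-interior dichotomy.

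Your option~(b), pseudolocality near the boundary, is essentially this same idea restricted to the collar. Your option~(a), however, is shakier than you present it: the evolution of $v=\log(u_{cc}/\bar u)$ is \emph{semilinear}, $\partial_t v=u_{cc}^{-1}\Delta v+\tfrac{2}{1+2t}(e^{-v}-1)$, and near $|x|=1$ the Euclidean coefficient $u_{cc}^{-1}$ degenerates, so ``linear with bounded coefficients'' is not accurate without passing to intrinsic coordinates. In fact the maximum-principle step is unnecessary: the paper's sandwich $(1+2t)\tilde h\le u_{cc}\le (1+2t)h$ (Remark~\ref{new_sandwich_rmk}) already pins $u_{cc}$ to $(1+2t)h$ in $C^0$ near $|x|=1$, since $\tilde h/h\to 1$ there. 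The paper explicitly mentions this regularity-theory alternative but opts for Chen's estimate as cleaner.
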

In contrast, the techniques of \cite{GT4} would allow one to construct Ricci flows with different rates of curvature blow-up on noncompact surfaces where the blow-up occurs not locally but at spatial infinity. Wu constructed  higher dimensional Ricci flows with other blow-up rates in \cite{Wu}.

The proof of Theorem \ref{mainthm} will use a combination of techniques, the main ones not exploiting the rotational symmetry of our setting, and the result could be generalised. 
One ingredient will be sharp estimates on the conformal factor of the flow.
Recall that \eqref{eqn:rf} is equivalent to
\begin{equation}
	\partial_t u = \triangle \log u
	\label{eqn:rfu}
\end{equation}
if $g(t)= u(t)(dx^2+dy^2)$, or equivalently
\begin{equation}
	\partial_t v = e^{-2v} \triangle v =-K
	\label{eqn:rfv}
\end{equation}
if $u(t)=e^{2v(t)}$.
The new $L^1-L^\infty$ smoothing estimate from \cite{TY} will give  sharp upper control on the conformal factor of $g_{cc}(t)$ almost immediately.
Moreover, by comparing with a family of cigar solutions, we will also obtain a sharp lower bound of the conformal factor, which is also an improvement of the lower bounds in \cite{revcusp}, established using Perelman's pseudolocality theorem. More precisely, we will prove
\begin{thm}
	\label{secondthm} Let $u_{cc}$ be the conformal factor of $g_{cc}$ on $B$ and $v_{cc}:=\frac{1}{2}\log u_{cc}$. 
Then 
\begin{equation*}
\frac{1}{8t}+\half\left(1+\log(4t)\right)\leq v_{cc}(0,t)
\end{equation*}
for $t\in (0,1/4)$, and 
\begin{equation*}
\max_{\bx \in B_{1/2}}v_{cc}(\bx, t) \leq \frac{1}{t}+C
\end{equation*}
for $t\in (0,1)$ and universal $C<\infty$.
\end{thm}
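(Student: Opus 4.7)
The natural approach is to invoke the $L^1$--$L^\infty$ smoothing estimate from \cite{TY} that the introduction flags as the key tool for bounding the conformal factor above. Since $u_{cc}(\cdot,t)\to h$ in $L^1_{loc}(B)$ by \eqref{gcc_L1} and $h\in L^1(B_{3/4})$ with explicit mass $\|h\|_{L^1(B_{3/4})}=2\pi/\log(4/3)$, the smoothing estimate should produce a pointwise upper bound on $u_{cc}(\cdot,t)$ over $B_{1/2}$ depending only on $t$ and this local $L^1$ mass. Taking $\tfrac12\log$ then converts this into $v_{cc}\leq 1/t+C$ on $B_{1/2}$ for some universal $C$.

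\textbf{Lower bound.} For the lower bound at the origin I would compare $u_{cc}$ from below with the family of rescaled cigar solitons. A direct computation shows that, for each $\lambda>0$,
\begin{equation*}
u_\lambda(\bx,t):=\frac{\lambda}{e^{4\lambda t}+\lambda^2 r^2}
\end{equation*}
solves \eqref{eqn:rfu} on $\R^2\times\R$ (it is the cigar soliton pulled back by its self-similar scaling diffeomorphisms and written in isothermal coordinates). My plan is then: (i) for $\lambda$ large, check that $u_\lambda(\bx,0)\leq h(\bx)$ on $B\setminus\{0\}$; this reduces to $\lambda r^2((\log r)^2-\lambda)\leq 1$, which is trivially true when $\lambda\geq(\log r)^2$, and in the remaining small-$r$ region $r<e^{-\sqrt\lambda}$ is elementary for $\lambda$ large; (ii) on $\partial B$, $u_\lambda$ is bounded while $u_{cc}\to\infty$ by completeness of $g_{cc}(t)$, so the spatial-boundary comparison is automatic; (iii) apply the parabolic comparison principle for \eqref{eqn:rfu}, extended to the $L^1$ initial datum $h$ using the approximation scheme that underlies Theorem \ref{exist_unique}, to conclude $u_{cc}(\bx,t)\geq u_\lambda(\bx,t)$ on $B\times(0,\infty)$; (iv) evaluate at $\bx=0$ to obtain $u_{cc}(0,t)\geq \lambda e^{-4\lambda t}$ and optimise in $\lambda$.

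\textbf{Main obstacle.} Step (iv) with the single-parameter family $\{u_\lambda\}$ yields only $v_{cc}(0,t)\geq -\tfrac12\log(4t)-\tfrac12$, attained at $\lambda=1/(4t)$. This falls far short of the target $\tfrac{1}{8t}+\tfrac12(1+\log(4t))$: the dominant $\tfrac{1}{8t}$ term is entirely absent, and even the subdominant $\log(4t)$ correction appears with the wrong sign. Extracting the sharp rate, which amounts to the exponential lower bound $u_{cc}(0,t)\gtrsim t\, e^{1/(4t)}$, from cigar comparisons is therefore the crux of the argument, and I would expect it to require a genuinely more elaborate construction—for instance cigars anchored at a sequence of carefully chosen positive times and iterated along $t_n\downarrow 0$, or a coupling of the cigar comparison with an ODE argument for $v_{cc}(0,\cdot)$ via $\partial_t v=-K$. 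Identifying the right refined family of cigar subsolutions (or the right coupling with an ODE) and verifying its admissibility in the maximum principle would be the principal technical hurdle.
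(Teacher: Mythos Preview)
Your upper bound plan is on track: the paper does apply the $L^1$--$L^\infty$ smoothing estimate of \cite{TY}, though to the smooth approximating flows $u_n$ (which satisfy $u_n(0)\leq h$) rather than directly to $u_{cc}$, and the sharp constant $2/t$ emerges from a specific choice of scale $\alpha\sim e^{2/t}$ in that theorem (see Lemma~\ref{lem:smoothing}).

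For the lower bound you have the right mechanism (cigar comparison via the maximum principle) and you correctly diagnose that your family $u_\lambda$ yields only a logarithmic bound; but your proposed remedies overshoot. The defect is not that cigar comparison is inherently too weak, nor that one must iterate in time or couple with an ODE: it is that your one-parameter family is a badly chosen slice of the natural \emph{two}-parameter cigar family $\frac{\varepsilon}{\delta+r^2}$. In your normalisation $\varepsilon=1/\lambda$ and $\delta=1/\lambda^2$, so you have imposed the artificial constraint $\delta=\varepsilon^2$, tying the peak height $\varepsilon/\delta$ rigidly to the curvature scale $2/\varepsilon$. The cusp, however, accommodates cigars that are dramatically taller and thinner for the same curvature. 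The paper lets $\varepsilon$ and $\delta$ vary independently and, for each $r_0\in(0,e^{-1})$, takes the unique cigar \emph{tangent} to the cusp at $\{r=r_0\}$ (Lemma~\ref{lem:touch}); explicitly \eqref{eqn:delta}--\eqref{eqn:varepsilon} give $\varepsilon(r_0)=[(-\log r_0-1)(-\log r_0)]^{-1}$ and $\delta(r_0)=r_0^2/(-\log r_0-1)$, so that $\delta$ is exponentially small in $1/\sqrt{\varepsilon}$ rather than polynomial. Evolving these tangent cigars via \eqref{eqn:cigarsolution} and evaluating at the origin yields \eqref{logexpression}; optimising over $r_0$ gives $r_0=e^{-1/(4t)}$ and exactly $u_{cc}(0,t)\geq 4t\,e^{1/(4t)+1}$. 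No iteration or ODE is needed --- only the second cigar parameter you discarded.
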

In other words, the conformal factor at the origin decays at a rate between $1/(8t)$ and $1/t$, neglecting lower order terms.

The upper bounds on the conformal factor will be exploited by a new Li-Yau differential Harnack estimate (Theorem \ref{thm:liyau}) that we prove in Section \ref{sec:liyau}. This estimate is more reminiscent of the original Li-Yau estimates \cite{LiYauActa} than the curvature Harnack estimates of Hamilton for Ricci flow \cite{surface}, although we will exploit crucially that we are working on a Ricci flow solution.
The upshot of that estimate will be that we can control the curvature from above in terms of the supremum of the conformal factor divided by time. Then the upper bound of $v_{cc}$ in Theorem \ref{secondthm} can be converted into the upper bound of curvature in Theorem \ref{mainthm}.  

Meanwhile, the lower bound for the conformal factor at the origin will be key in order to obtain the lower bound for the curvature:
The solution must remain under the hyperbolic cusp solution $(1+2t)h$ for all time
(see Lemma \ref{max_stretch_lem} and its consequence Remark \ref{new_sandwich_rmk} below), 
so the large conformal factor at the origin (for small $t$) implies some large bending near the origin, which gives the desired lower curvature bound in Theorem \ref{mainthm}. The details of this argument appear in Section \ref{sec:lower}.

Finally, we remark that an alternative way of deriving sharp estimates for the flow $g_{cc}(t)$ would be the method of rigorous matched asymptotic expansions. This alternative approach could give slightly refined asymptotic information, but would be harder to generalise to the non-rotationally symmetric case.

\section{The contracting cusp Ricci flow solution}
\label{sec:addon}
The first aim
of this section is to construct the Ricci flow solution $g_{cc}$, which has the hyperbolic cusp metric as the initial data in the 
sense of Theorem \ref{exist_unique} and caps off the cusp at  infinity instantaneously. 
(An underlying aim is also to gather the sharp estimates that will be required to prove our curvature asymptotics.)
Giesen and the first author 
established the well-posedness of instantaneously complete Ricci flows from any {\it smooth} initial data whether complete or not \cite{GT2,ICRF_UNIQ}. 
The hyperbolic cusp metric can be regarded as a metric defined on $B$ with a singular point at the origin. A natural approach for constructing a Ricci flow solution from it is to consider a sequence of smooth metrics approximating the hyperbolic cusp metric and to show that the sequence of Ricci flow solutions starting from these approximations converges to the desired solution.

\subsection{An approximation of the hyperbolic cusp}
\label{sec:approx}
There are many different ways of choosing the approximation sequence here. It will be clear by the end of Section \ref{unique_sect} that the final limit solution will be independent of the choice.
However, for later use in Section \ref{sec:liyau}, we shall use a special sequence obtained by considering cigar metrics touching the hyperbolic cusp.

Recall that the standard cigar metric on $\R^2$ is given by 
\begin{equation*}
	\frac{1}{1+ r^2} (dx^2+dy^2).
\end{equation*}
We introduce two parameters $\varepsilon>0$ and $\delta>0$ and consider the family of conformal factors
\begin{equation*}
	\frac{\varepsilon}{\delta+ r^2}
\end{equation*}
of scaled cigars. 
Note that $\varepsilon$ determines the maximum curvature of the metric and $\delta$ is irrelevant to the geometry and is related to the parametrization, or equivalently, since the cigar is a steady soliton, is related to time.

Amongst this two-parameter family of cigar metrics, we are interested in a one-parameter subfamily of metrics that are tangent to the hyperbolic cusp. 
For each $\de>0$, we can increase $\ep$ from zero until the conformal factors first touch.
It is obvious from Figure \ref{fig:cigar} or a simple argument that the cigar will only be tangent to the cusp on a circle 
$r=r_0$, with $r_0<\frac{1}{e}$. Moreover, as indicated in Figure \ref{fig:env}, the family of cigar metrics has the cusp metric as an envelope up to the `horizon' at $r=e^{-1}$.

\newcommand*{\sampno}{50} 

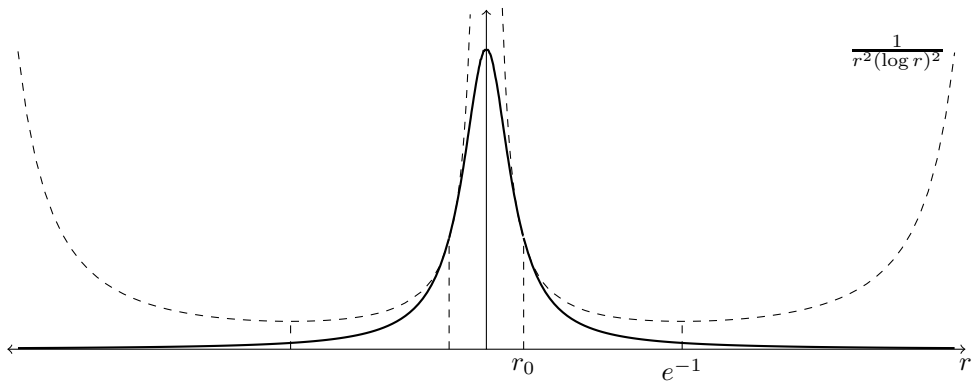
\begin{figure}
\centering
\begin{tikzpicture}[xscale=7,yscale=0.05]

\newcommand*{\rlimit}{0.88}
\newcommand*{\axisheight}{90}
\newcommand*{\rzero}{0.07}
\newcommand*{\mlogrzero}{-ln(\rzero)}
\newcommand*{\epsi}{1/(\mlogrzero*(\mlogrzero-1))}
\newcommand*{\delt}{\rzero*\rzero/(\mlogrzero-1)}
\newcommand*{\ee}{2.71828}

\draw [<->] (0,\axisheight) -- (0,0) -- (0.9,0) node[below]{$r$} ;
\draw [->] (0,0) -- (-0.9,0);

\draw[dashed, domain=\rzero:\rlimit, samples=\sampno] 
plot (\x, {
1/(\x * ln(\x))^2
}) node[left]{$\frac{1}{r^2(\log r)^2}$};

\draw[dashed, domain=0.03:\rzero, samples=\sampno] 
plot (\x, {
1/(\x * ln(\x))^2
}); 

\draw[dashed, domain=-\rlimit:-\rzero, samples=\sampno] 
plot (\x, {
1/(\x * ln(-\x))^2
});

\draw[dashed, thin, domain=-\rzero:-0.03, samples=\sampno] 
plot (\x, {
1/(\x * ln(-\x))^2
}); 

\draw[thick, domain=-\rzero:\rzero, samples=2*\sampno] 
plot (\x, {
\epsi/(\delt+(\x * \x))
});
\draw[thick, domain=-\rlimit:-\rzero, samples=2*\sampno] 
plot (\x, {
\epsi/(\delt+(\x * \x))
});
\draw[thick, domain=\rzero:\rlimit, samples=2*\sampno] 
plot (\x, {
\epsi/(\delt+(\x * \x))
});

\draw[dashed, thin] (\rzero,0) node[below]{$r_0$} -- 
(\rzero, {1/((\rzero * \mlogrzero * \rzero * \mlogrzero ))});

\draw[dashed, thin] (-\rzero,0) -- 
(-\rzero, {1/((\rzero * \mlogrzero * \rzero * \mlogrzero ))});

\draw[dashed, thin] ({1/\ee},0) node[below]{$e^{-1}$} -- 
({1/\ee}, {\ee * \ee});

\draw[dashed, thin] ({- 1/\ee},0)  -- 
({-1/\ee}, {\ee * \ee});
\end{tikzpicture}
\caption{A cigar (solid) tangent to the hyperbolic cusp (dashed)}
\label{fig:cigar}
\end{figure}

\begin{figure}
\centering
\begin{tikzpicture}[xscale=15,yscale=0.05]

\newcommand*{\rlimit}{0.4}
\newcommand*{\axisheight}{90}
\newcommand*{\rzero}{0.2}
\newcommand*{\mlogrzero}{-ln(\rzero)}
\newcommand*{\epsi}{1/(\mlogrzero*(\mlogrzero-1))}
\newcommand*{\delt}{\rzero*\rzero/(\mlogrzero-1)}
\newcommand*{\ee}{2.71828}

\draw [<->] (0,\axisheight) -- (0,0) -- (0.43,0) node[below]{$r$} ;
\draw [->] (0,0) -- (-0.43,0);

%
%
\draw[dashed, domain=0.03:\rlimit, samples=\sampno] 
plot (\x, {
1/(\x * ln(\x))^2
}); 

\draw[dashed, domain=-\rlimit:-0.03, samples=\sampno] 
plot (\x, {
1/(\x * ln(-\x))^2
});

%
%
\draw[domain=-\rlimit:\rlimit, samples=2*\sampno] 
plot (\x, {
\epsi/(\delt+(\x * \x))
});
%
%
\renewcommand*{\rzero}{0.15}
\draw[domain=-\rlimit:\rlimit, samples=2*\sampno] 
plot (\x, {
\epsi/(\delt+(\x * \x))
});
%
%
\renewcommand*{\rzero}{0.10}
\draw[domain=-\rlimit:\rlimit, samples=2*\sampno] 
plot (\x, {
\epsi/(\delt+(\x * \x))
});
%
%
\renewcommand*{\rzero}{0.08}
\draw[domain=-\rlimit:\rlimit, samples=2*\sampno] 
plot (\x, {
\epsi/(\delt+(\x * \x))
});
%
%
\renewcommand*{\rzero}{0.07}
\draw[domain=-\rlimit:\rlimit, samples=2*\sampno] 
plot (\x, {
\epsi/(\delt+(\x * \x))
});
%
%

\draw[dashed, thin] ({1/\ee},0) node[below]{$e^{-1}$} -- 
({1/\ee}, {\ee * \ee});

\draw[dashed, thin] ({- 1/\ee},0)  -- 
({-1/\ee}, {\ee * \ee});
\end{tikzpicture}
\caption{The cusp (dashed) as the envelope of cigars (solid)}
\label{fig:env}
\end{figure}
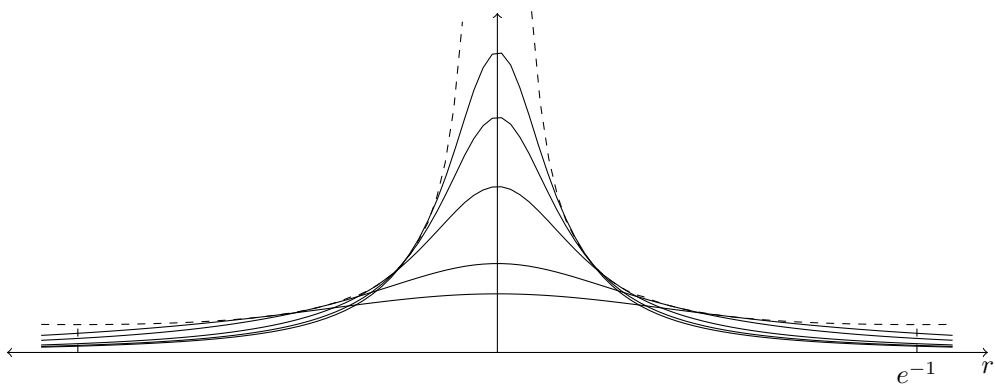

For $r_0<1/e$, we want to solve for $\varepsilon$ and $\delta$ so that the conformal factors of the cigar and the cusp are tangent at $r=r_0$. This is equivalent to the following two equations, asserting that the metrics agree
\begin{equation*}
\frac{\varepsilon}{\delta+r_0^2}= \frac{1}{r_0^2(\log r_0)^2}  
\end{equation*}
as do their first derivatives
\begin{equation*}
\varepsilon\frac{2r_0}{(\delta+r_0^2)^2}=  2\frac{(-\log r_0-1)}{r_0^3(-\log r_0)^3}.
\end{equation*}
Dividing the first equation by the second gives
\begin{equation}\label{eqn:deltar0}
	\delta+r_0^2=\frac{r_0^2(-\log r_0)}{(-\log r_0-1)},
\end{equation}
from which we get
\begin{equation}\label{eqn:delta}
	\delta(r_0)=\frac{r_0^2(-\log r_0)}{(-\log r_0-1)}-r_0^2= \frac{r_0^2}{-\log r_0-1}
\end{equation}
and
\begin{equation}\label{eqn:varepsilon}
	\varepsilon(r_0)=(\delta+r_0^2)\frac{1}{r_0^2(\log r_0)^2}=\frac{1}{(-\log r_0-1)(-\log r_0)}.
\end{equation}
The next lemma implies that the cigar metric corresponding to $\varepsilon(r_0)$ and $\delta(r_0)$, i.e.
\begin{equation}
	\tilde{u}_{r_0}(\bx):=\frac{\varepsilon(r_0)}{\delta(r_0)+ r^2},
	\label{eqn:ur0}
\end{equation}
where $\bx=(x,y)$ so that $r=\abs{\bx}$,
lies below the cusp and touches it only on $\set{r=r_0}$. The proof, being very elementary computations, is moved to the appendix. 
\begin{lemma}\label{lem:touch}
For $0<r_0<e^{-1}$, and 
with $\delta(r_0)$ and $\varepsilon(r_0)$ given as above, we have
%
%
\begin{equation*}
	\frac{\varepsilon}{\delta+r^2}\leq \frac{1}{r^2(\log r)^2}
\end{equation*}
for all $0<r<1$ with equality only at $r=r_0$.
\end{lemma}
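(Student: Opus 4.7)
The plan is to rewrite the inequality in the equivalent polynomial form
\[
F(r):=\delta+r^2-\varepsilon\,r^2(\log r)^2 \geq 0\qquad\text{on }(0,1),
\]
and show that $F$ attains its minimum value $0$ uniquely at $r=r_0$. The construction of $\varepsilon(r_0)$ and $\delta(r_0)$ is precisely the tangency condition, so by definition $F(r_0)=0$ and $F'(r_0)=0$; the content of the lemma is that this critical point is in fact the global minimum of $F$ on $(0,1)$.

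Next, I would compute
\[
F'(r)=2r\bigl[1-\varepsilon\log r\,(\log r +1)\bigr]=:2r\,G(r),
\]
so the sign of $F'$ on $(0,1)$ is the sign of $G$. Note that on $[e^{-1},1)$ we have $\log r\in[-1,0)$, so $\log r\,(\log r+1)\le 0$ and hence $G>0$ on $[e^{-1},1)$. On $(0,e^{-1})$ the product $\log r\,(\log r+1)$ is strictly positive and tends to $+\infty$ as $r\downarrow 0$, so $G(0^+)=-\infty$ while $G(e^{-1})=1$; thus $G$ vanishes somewhere in $(0,e^{-1})$.

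To show this zero is unique and equal to $r_0$, substitute $s=-\log r$ (so $s>1$ on $(0,e^{-1})$) and $s_0=-\log r_0$. The equation $G(r)=0$ becomes $s(s-1)=1/\varepsilon(r_0)=s_0(s_0-1)$, whose only root with $s>1/2$ is $s=s_0$, i.e.\ $r=r_0$. Therefore $G<0$ on $(0,r_0)$ and $G>0$ on $(r_0,1)$, so $F$ is strictly decreasing on $(0,r_0)$ and strictly increasing on $(r_0,1)$. Combined with $F(r_0)=0$, this gives $F\geq 0$ on $(0,1)$ with equality only at $r_0$, which is the claim.

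I do not anticipate a serious obstacle: everything reduces to the one-variable calculus analysis of $F$, and the only non-cosmetic point is the uniqueness of the critical point, which is handled cleanly by the substitution $s=-\log r$ that turns $G=0$ into a quadratic with a single relevant root. One could equivalently record the computation $F''(r_0)>0$ explicitly from the formulas \eqref{eqn:delta}--\eqref{eqn:varepsilon} to re-confirm that $r_0$ is a local minimum, but the monotonicity argument above already gives the global statement directly.
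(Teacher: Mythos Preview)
Your argument is correct and follows essentially the same route as the paper: both reduce the inequality to showing that a one-variable function has its unique extremum at $r_0$, establish this by analysing the derivative, and conclude using the tangency conditions $F(r_0)=F'(r_0)=0$. The only cosmetic difference is that the paper's choice of $F$ (equal to $-\varepsilon^{-1}$ times yours) makes the derivative factor explicitly as $2r\bigl[(-\log r)+(-\log r_0)-1\bigr]\bigl[(-\log r)-(-\log r_0)\bigr]$, from which the sign is read off directly, whereas you recover the same factorisation implicitly by solving the quadratic $s(s-1)=s_0(s_0-1)$.
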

With the help of $\tilde{u}_{r_0}$, we define for $r_0\in (0,e^{-1})$,
a function $u_{r_0}:B\to\R$ by
\begin{equation*}
	u_{r_0}(\bx) = \left\{
		\begin{array}[]{ll}
			\tilde{u}_{r_0}(\bx) & \quad 0\leq r\leq r_0 \\
			h(\bx) & \quad r_0<r<1.
		\end{array}
		\right.
\end{equation*}
The graph of $u_{r_0}$ is shown in Figure \ref{fig:cigar} as the solid line (the cigar metric) for $r<r_0$ and the dashed line (the hyperbolic cusp metric) for $r\geq r_0$. By our choice of $\varepsilon$ and $\delta$, $u_{r_0}$ is a $C^1$ function. Moreover, it is obvious from the graph or a simple argument that 
\begin{equation}
	u_{r_0}(r)\geq u_{r_0}(e^{-1})=e^{2}
	\label{eqn:minimal}
\end{equation}
for all $r\in [0,1)$.
Now, for any $r_n\downto 0$, the sequence $u_n:=u_{r_n}$ 
is an approximation of $h$. Besides the obvious fact that for any $r\in (0,1)$, $u_n=h$ on any $B\setminus B_r$ for sufficiently large $n$, we note that
\begin{equation}\label{eqn:approx}
	\lim_{n\to \infty} \int_{B} \abs{u_n-h} d\bx =0
\end{equation}
and that:
\begin{lemma}
The functions $u_{r_0}:B\to\R$ are decreasing in $r_0$. In particular, 
	$\{u_n\}$ is an increasing sequence of functions $B\to\R$.
	\label{lem:increase}
\end{lemma}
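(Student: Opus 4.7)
The plan is to fix $0 < r_0^{(1)} < r_0^{(2)} < e^{-1}$ and check the pointwise inequality $u_{r_0^{(1)}}(\bx) \geqs u_{r_0^{(2)}}(\bx)$ by splitting into three regimes according to where $r=|\bx|$ lies. In the outermost regime $r \geqs r_0^{(2)}$, both functions reduce to $h(\bx)$ and are trivially equal. In the middle regime $r_0^{(1)} < r < r_0^{(2)}$, I would use that $u_{r_0^{(1)}}(\bx) = h(\bx)$ while $u_{r_0^{(2)}}(\bx) = \tilde u_{r_0^{(2)}}(\bx) \leqs h(\bx)$ by Lemma \ref{lem:touch}. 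Both of these cases are essentially free from the definition.

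The substance lies in the innermost regime $r \leqs r_0^{(1)}$, where both functions are cigars and I need $\tilde u_{r_0^{(1)}}(\bx) \geqs \tilde u_{r_0^{(2)}}(\bx)$. Putting the difference over a common denominator leaves the numerator
\[
\varepsilon(r_0^{(1)})[\delta(r_0^{(2)}) + r^2] - \varepsilon(r_0^{(2)})[\delta(r_0^{(1)}) + r^2],
\]
and the key observation is that this is \emph{affine} in $r^2$. Thus non-negativity on $r^2 \in [0, (r_0^{(1)})^2]$ reduces to checking the two endpoints. At $r = r_0^{(1)}$, Lemma \ref{lem:touch} supplies the strict inequality $\tilde u_{r_0^{(2)}}(r_0^{(1)}) < h(r_0^{(1)}) = \tilde u_{r_0^{(1)}}(r_0^{(1)})$ since $r_0^{(1)} \neq r_0^{(2)}$. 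At $r = 0$, formulas \eqref{eqn:delta} and \eqref{eqn:varepsilon} give $\tilde u_{r_0}(0) = 1/(r_0^2(-\log r_0))$, so I would only need the elementary fact that $r \mapsto r^2(-\log r)$ is increasing on $(0, e^{-1/2})$, which contains the relevant range $(0, e^{-1})$.

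The second assertion about $\{u_n\}$ is then immediate from the first together with $r_n \downto 0$. The only genuine obstacle is spotting the affine-in-$r^2$ structure of the numerator; once that is noticed, the continuum of comparisons collapses to the two endpoint checks, each of which is already supplied by Lemma \ref{lem:touch} or by a short single-variable calculus check. An equivalent alternative would be to differentiate $\tilde u_{r_0}(r)$ in $r_0$ at fixed $r$, using Lemma \ref{lem:touch} to identify $r_0 = r$ as the critical point where the maximum value $h(r)$ is attained, and read off the sign of the derivative on $\{r_0 > r\}$ from that.
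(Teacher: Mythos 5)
Your proof is correct, but it takes a different route from the paper's. The paper works infinitesimally: for fixed $r<r_0$ it writes out $u_{r_0}^{-1}=-r_0^2\log r_0+r^2(\log r_0)^2+r^2\log r_0$ using \eqref{eqn:delta} and \eqref{eqn:varepsilon}, differentiates in $r_0$ to get $\frac{1}{r_0}(r_0^2-r^2)(-2\log r_0-1)$, and reads off positivity from $r_0<e^{-1}$ --- a three-line computation needing no input from Lemma \ref{lem:touch} (this is essentially the ``equivalent alternative'' you sketch at the end, except the paper differentiates the reciprocal, which is polynomial in $r^2$ and $\log r_0$, rather than locating a critical point). Your version instead compares two fixed parameter values and exploits the affine-in-$r^2$ structure of the numerator to reduce to the endpoints $r=0$ and $r=r_0^{(1)}$, the latter handled by the tangency/strict-inequality content of Lemma \ref{lem:touch} and the former by the monotonicity of $r\mapsto r^2(-\log r)$ on $(0,e^{-1/2})$ (note $\tilde u_{r_0}(0)=\varepsilon/\delta=1/(r_0^2(-\log r_0))$, as you say). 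What your approach buys is that it makes explicit the transition across $r=r_0$ via the middle regime, which the paper passes over silently; what it costs is reliance on Lemma \ref{lem:touch} and an extra single-variable check, where the paper's derivative computation is self-contained and shorter. Both arguments are complete and correct.
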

The simple proof of this lemma is postponed to the appendix.

\subsection{Existence of $g_{cc}$}\label{sec:existence}
In this section, we confirm the existence of Theorem \ref{exist_unique}, taking a shortcut compared with \cite{revcusp} by exploiting the new estimate in \cite{TY}. The construction will also imply the upper bound of Theorem \ref{secondthm} automatically.

The local existence of a Ricci flow starting from $u_{r_0}(dx^2+dy^2)$ is known by a result of Shi \cite{Shi}. The global existence of a solution $u_{r_0}(t)(dx^2+dy^2)$, for $t\in [0,\infty)$,
was proved by Giesen and the first author in \cite{GT2}. 
More generally, the solutions from \cite{GT2} for general initial data, together with their property of being \emph{maximally stretched} (see \cite{GT2}) and the uniqueness of \cite{ICRF_UNIQ}, gives us the following comparison principle (as explained in 
\cite[\S 1]{ICRF_UNIQ}).
\begin{lemma}
\label{max_stretch_lem}
Suppose $u(t)$ and $\tilde u(t)$ are the conformal factors of two smooth Ricci flows on some underlying Riemann surface for $t\in [0,T]$ and $u(0)\geq \tilde u(0)$. If the Ricci flow of $u(t)$ is complete, then $u(t)\geq \tilde u(t)$ for all $t\in [0,T]$.
\end{lemma}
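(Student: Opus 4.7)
The plan is to deduce the lemma directly from two ingredients already in the literature: the existence of a \emph{maximally stretched} instantaneously complete Ricci flow from \cite{GT2}, and the uniqueness theorem of \cite{ICRF_UNIQ}. Concretely, I would first invoke the existence result of Giesen--Topping applied to the (smooth, complete) initial datum $u(0)$ to produce an instantaneously complete Ricci flow $w(t)$ on the same underlying Riemann surface, starting from $u(0)$, with the maximally stretched property: for every smooth Ricci flow $\hat u(t)$ on the same surface, defined on some time interval starting at $0$, with $\hat u(0)\leq u(0)$, one has $\hat u(t)\leq w(t)$ throughout the common time interval.

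Next, I would use the uniqueness theorem of \cite{ICRF_UNIQ}: the hypothesis that $u(t)$ is itself a smooth complete Ricci flow on $[0,T]$ starting from $u(0)$ forces $u(t)\equiv w(t)$ on $[0,T]$. Once this identification is made, applying the maximally stretched property with $\hat u=\tilde u$ (whose existence and smoothness on $[0,T]$ is exactly what is hypothesised, and whose initial value satisfies $\tilde u(0)\leq u(0)=w(0)$) immediately yields $\tilde u(t)\leq w(t)=u(t)$ for all $t\in[0,T]$, which is the desired conclusion.

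The substance of the argument therefore lies entirely in the cited papers, and no independent maximum-principle calculation is required here; indeed, a naive attempt to compare $u-\tilde u$ via the evolution $\partial_t(u-\tilde u)=\Delta(\log u-\log \tilde u)$ would run into trouble because $\tilde u$ need not be complete, so one has no a priori control of the comparison at spatial infinity. The key point that circumvents this is precisely that completeness of the \emph{upper} flow $u(t)$, rather than of both, is enough for the maximally stretched construction to work. The main (minor) obstacle is just to verify that the setup of the lemma matches the framework of \cite{GT2,ICRF_UNIQ} — in particular that $u(0)$ is smooth and defines a complete metric, which follows from the smoothness and completeness of $u(t)$ on the closed interval $[0,T]$ — after which the lemma reduces to a direct citation, as already indicated in the parenthetical reference to \cite[\S 1]{ICRF_UNIQ}.
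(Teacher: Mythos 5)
Your proposal is correct and matches the paper's own treatment: the paper gives no independent argument for this lemma, but derives it exactly as you do, from the maximally stretched property of the solutions constructed in \cite{GT2} combined with the uniqueness theorem of \cite{ICRF_UNIQ} (as indicated in \cite[\S 1]{ICRF_UNIQ}). Your additional remark that completeness of only the upper flow is what makes the comparison work is precisely the point of the lemma.
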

\brmk
\label{max_stretch_conseq}
A first application of this lemma, combined with Lemma \ref{lem:increase}, is that if 
$0<r_0\leq\tilde r_0<e^{-1}$ then
$u_{\tilde r_0}(t)\leq u_{r_0}(t)$.
\ermk
We wish to consider flows $u_{r_0}(t)$ for $r_0$ equal to each $r_n\downto 0$ considered above, and we denote this increasing sequence by $u_n(t):=u_{r_n}(t)$. 
We claim that 
	
	\begin{enumerate}[(a)]
		\item a subsequence (still denoted by $u_n$) converges to another 
		solution $u_{cc}$ smoothly locally on $B\times (0,\infty)$;
		\item as $t\downto 0$, $u_{cc}(t)$ converges to $h$ smoothly locally on $B\setminus \set{0}$;
		\item for any $\ck\subset\subset B$,
	\begin{equation}\label{eqn:L1initial}
		\lim_{t\downto 0} \int_{\ck} \abs{u_{cc}(t)-h} d\bx =0.
	\end{equation}
	\end{enumerate}
	The metric $g_{cc}(t):=u_{cc}(t)(dx^2+dy^2)$ would then be the contracting cusp Ricci flow solution whose existence is asserted by Theorem \ref{exist_unique}. 
Note that the limit $u_{cc}(t)$ would be automatically complete
because the sequence $u_n(t)$ is increasing, and so
$u_n(t)\leq u_{cc}(t)$.
	The rest of this section is devoted to the proof of this claim
(a) to (c).

The key to proving (a) is obtaining upper and lower bounds on the conformal factors $u_n$. We will then be able to apply parabolic regularity theory. The following simple lemma gives the lower bound, and upper bounds away from the origin.
\begin{lemma}
\label{upper_and_lower_un}
For all $t\geq 0$, we have
\begin{equation}
\label{un_lower}
u_n(\cdot,t)\geq e^2\qquad\text{ in } B,
\end{equation}
and 
\begin{equation}
\label{unh}
u_n(\cdot,t)\leq (1+2t)h\qquad\text{ in } B\backslash\{0\}.
\end{equation}
\end{lemma}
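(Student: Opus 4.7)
The plan is to derive both inequalities by two direct applications of the comparison principle in Lemma \ref{max_stretch_lem}, in each case choosing a barrier Ricci flow whose completeness can be read off.

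For the lower bound \eqref{un_lower}, I would compare $u_n(t)$ with the stationary flat flow $\tilde u(t) \equiv e^2$ on $B$, which is trivially a (non-complete) Ricci flow solution. The initial inequality $u_n(\cdot,0) = u_{r_n} \geq e^2$ is exactly \eqref{eqn:minimal} from the previous subsection. Since the flow $u_n(t)$ is the one guaranteed to be complete by \cite{GT2}, the hypothesis of Lemma \ref{max_stretch_lem} applies with $u = u_n$, $\tilde u \equiv e^2$, yielding $u_n(\cdot,t) \geq e^2$ for all $t \geq 0$.

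For the upper bound \eqref{unh}, I would compare with the homothetically dilating hyperbolic cusp flow $\tilde u(t) := (1+2t)h$, viewed as a Ricci flow on the Riemann surface $B\setminus\{0\}$. This is a complete flow on $B\setminus\{0\}$ for every $t\geq 0$, since $h$ defines the complete hyperbolic cusp metric there. By the construction of the initial data, $u_{r_n}(\bx) = \tilde u_{r_n}(\bx) \leq h(\bx)$ for $r \leq r_n$ by Lemma \ref{lem:touch}, and $u_{r_n}(\bx) = h(\bx)$ for $r > r_n$, so $u_n(\cdot,0) \leq h = \tilde u(0)$ on $B\setminus\{0\}$. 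Applying Lemma \ref{max_stretch_lem} now with the roles reversed, $u := \tilde u = (1+2t)h$ (complete on $B\setminus\{0\}$) and $\tilde u := u_n|_{B\setminus\{0\}}$, gives $u_n(\cdot,t) \leq (1+2t)h$ on $B\setminus\{0\}$, as required.

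Both steps are essentially immediate once the right barriers are identified; there is no real obstacle beyond recognising that the completeness hypothesis in Lemma \ref{max_stretch_lem} must be applied to the $u_n$ flow in the first inequality and to the hyperbolic cusp flow in the second.
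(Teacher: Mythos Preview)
Your proposal is correct and matches the paper's own proof essentially line for line: both inequalities are obtained by applying Lemma~\ref{max_stretch_lem}, with completeness supplied by $u_n(t)$ for the lower bound (against the constant flow $e^2$, using \eqref{eqn:minimal}) and by $(1+2t)h$ on $B\setminus\{0\}$ for the upper bound (using $u_n(\cdot,0)\leq h$). The only additional detail you spell out is the explicit appeal to Lemma~\ref{lem:touch} for the initial inequality on the cigar part, which the paper leaves implicit.
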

\begin{proof}
The lower bound \eqref{un_lower} follows immediately from 
\eqref{eqn:minimal} together with Lemma \ref{max_stretch_lem}, because $u_n(\cdot,t)$ and $e^2$ are the conformal factors of two Ricci flows on $B$,
and the former is complete.
Meanwhile, the upper bound also follows from Lemma \ref{max_stretch_lem}, but this time because $(1+2t)h$ is the conformal factor of a complete Ricci flow on $B\setminus \set{0}$, and $u_n(\cdot,0)\leq h$.
\end{proof}
The more subtle issue is to obtain upper bounds near the origin, but this will follow from 
an application of the $L^1-L^\infty$ estimate proved in \cite{TY}.
	\begin{lemma}\label{lem:smoothing}
	Suppose $u:B\times [0,T)\to (0,\infty)$ is a smooth solution to the equation \eqref{eqn:rfu} with
		\begin{equation*}
			u_0:=u(\cdot,0)\leq h
		\end{equation*}
		on $B\setminus \set{0}$. 
Then 
		\begin{equation*}
			\log u\leq \frac{2}{t} +C
		\end{equation*}
		on $B_{1/2}$ for $t\in (0,\min\{1,T\})$, where $C$ is universal.
\end{lemma}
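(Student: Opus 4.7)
The plan is to apply the new $L^1$-$L^\infty$ smoothing estimate from \cite{TY} directly, with the key geometric input being the finite local area of the hyperbolic cusp conformal factor $h$.

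First, I would use the hypothesis $u_0\leq h$ on $B\setminus\set{0}$ to control the local $L^1$ norm of $u_0$. Fix $\bx_0\in B_{1/2}$ and a convenient radius $\rho$ with $B_\rho(\bx_0)\subset B$ (for instance $\rho=1/4$, so $B_\rho(\bx_0)\subset B_{3/4}$ where $h$ is bounded). Then
\[
\int_{B_\rho(\bx_0)} u_0\, d\bx \;\leq\; \int_{B_\rho(\bx_0)} h\, d\bx \;\leq\; M
\]
for some universal $M<\infty$. The sharp value relevant to recovering the coefficient $2$ comes from the total area of the cusp over the horizon region, which can be computed explicitly as
\[
\int_{\set{r<1/e}} h\, d\bx \;=\; 2\pi \int_0^{1/e}\frac{dr}{r(\log r)^2} \;=\; 2\pi.
\]

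Next, I would invoke the TY smoothing estimate at $\bx_0$, which for a solution of $\partial_t u = \triangle\log u$ on $B$ with $t$ in an interval comparable to $\rho^2$ should give a bound of the form
\[
\log u(\bx_0, t) \;\leq\; \frac{1}{\pi t}\int_{B_\rho(\bx_0)} u_0\, d\bx + C_0,
\]
with $C_0$ depending only on $\rho$. Combining the two displayed bounds and taking the supremum over $\bx_0\in B_{1/2}$ yields $\log u\leq 2/t + C$ for $t\in(0,\min\set{1,T})$ and universal $C$; the leading coefficient $2$ arises precisely as the product $\frac{1}{\pi}\cdot 2\pi$ of the TY constant and the total cusp area.

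The main obstacle is a matter of constants rather than ideas: one must check that TY's estimate, applied with the right $\rho$ (equivalent to exhausting the horizon disc $\set{r<1/e}$), produces the sharp coefficient $\frac{2}{t}$ rather than some larger universal number. Once this bookkeeping is done, the argument is immediate, and in particular no further curvature or comparison-principle input is needed beyond $u_0\leq h$; the universality of $C$ follows because every quantity in sight depends only on absolute constants and the fixed choice of $\rho$.
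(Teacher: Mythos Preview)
Your proposal has a genuine gap: the version of the TY estimate you invoke is not what the theorem actually says. The result from \cite{TY}, recorded here as Theorem~\ref{L1Linfthm}, is a waiting-time comparison: given a level $\alpha\geq 1$, once $t\geq\frac{1+\delta}{4\pi}\|(u_0-\alpha\tilde h)_+\|_{L^1(B)}$ one obtains $u(t)\leq C(\delta)\,\alpha\tilde h$ on $B$. It does not yield a direct bound of the shape $\log u(\bx_0,t)\leq\frac{1}{\pi t}\int_{B_\rho(\bx_0)}u_0+C_0$, and no such statement follows from it. Your derivation of the leading coefficient as $\frac{1}{\pi}\cdot 2\pi$ also conflates two different integrals: your claimed estimate involves $\int_{B_\rho(\bx_0)}u_0$ for a ball of variable centre $\bx_0\in B_{1/2}$, whereas the quantity $2\pi$ is $\int_{\{r<1/e\}}h$; these do not coincide, and for typical $\bx_0$ the former is considerably larger than $2\pi$.

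The paper's argument obtains the sharp $2/t$ by choosing the comparison level $\alpha$ to depend on $t$. One sets $r=e^{-1/t}$ and $\alpha=(r\log r)^{-2}\leq e^{2/t}$; then $u_0\leq h$ together with a direct computation gives
\[
\|(u_0-\alpha\tilde h)_+\|_{L^1(B)}\leq\int_{B_r}(h-\alpha)\,d\bx=\frac{2\pi}{-\log r}-\frac{\pi}{(\log r)^2}\leq 2\pi t,
\]
so the TY hypothesis is satisfied at time $t$ (with $\delta=1$), and one concludes $u(t)\leq C\alpha\tilde h\leq Ce^{2/t}\tilde h$, hence $\log u\leq 2/t+C$ on $B_{1/2}$. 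The coefficient $2$ therefore arises as $\log\alpha$, not from dividing a fixed $L^1$ mass by $\pi t$; what you describe as mere ``bookkeeping'' is in fact the essential mechanism of the proof.
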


\begin{rmk}
In \cite{revcusp} it was shown, for example, that a surface with a hyperbolic cusp can be evolved under Ricci flow by allowing the cusp to collapse.  The key \emph{a priori} estimate there, in the language of this paper,  was essentially a weaker version of the above lemma in the sense that the upper bound for $\log u$ was $C/t$ with $C$ some uncontrolled universal constant $C$.
\end{rmk}

We will give a proof of Lemma \ref{lem:smoothing} based on the following new estimate from \cite{TY}, describing the evolution compared with the Poincar\'e metric on $B$ with conformal factor
	\begin{equation*}
		\tilde{h}(\bx)=\left( \frac{2}{1- r^2} \right)^2.
	\end{equation*}

\begin{thm}[{Special case of \cite[Theorem 1.3]{TY}}]
\label{L1Linfthm}
Suppose $u:B\times [0,T)\to (0,\infty)$ is a smooth solution to the equation
$\pt u=\lap\log u$, with initial data $u_0:=u(0)$
on the unit ball $B\subset\R^2$, and suppose that $(u_0-\al \tilde h)_+\in L^1(B)$
for some $\al\geq 1$. 
Then for all $\de>0$ (however small) and any time $t\in [0,\min\{1,T\})$ satisfying 
$$t\geq \frac{\|(u_0-\al \tilde h)_+\|_{L^1(B)}}{4\pi}(1+\de),
\quad\text{ we have }\quad
u(t)\leq C\al{\tilde h} \quad\text{ throughout }B,$$
where $C<\infty$ depends only on $\de$.
\end{thm}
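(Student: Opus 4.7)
The plan is to apply Theorem~\ref{L1Linfthm} at each time $t\in(0,\min\{1,T\})$ with parameters $\de=1$ and $\al=\al(t)\geq 1$ chosen as large as possible subject to the smallness constraint $\|(u_0-\al\tilde h)_+\|_{L^1(B)}\leq 2\pi t$. The conclusion $u(t)\leq C\al\tilde h$ on $B$ then reduces to the desired bound because $\tilde h$ is uniformly bounded on $B_{1/2}$ and we will be able to arrange $\log\al\leq \tfrac{2}{t}+\text{const}$. The reason this is not an immediate consequence of the theorem is that the cusp metric $h$ is not pointwise bounded by any constant multiple of the Poincar\'e metric $\tilde h$ (the ratio $h/\tilde h$ tends to $\infty$ at the origin), so the excess $(h-\al\tilde h)_+$ is never identically zero; one has to balance the choice of $\al$ against the $L^1$ mass of the cusp concentrated on a small disc around the origin.

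Concretely, given $t\in(0,1)$, I would set $r_0:=e^{-1/t}$ and $\al:=h(r_0)/\tilde h(r_0)=\tfrac{(1-r_0^2)^2}{4r_0^2(\log r_0)^2}$. A short monotonicity lemma (discussed below) yields that $h/\tilde h$ is strictly decreasing on $(0,1)$, so $h\leq\al\tilde h$ throughout $B\setminus B_{r_0}$, and, using the hypothesis $u_0\leq h$,
$$\|(u_0-\al\tilde h)_+\|_{L^1(B)}\leq \int_{B_{r_0}}h\,dx\,dy=\frac{2\pi}{-\log r_0}=2\pi t,$$
the cusp integral being computed by the substitution $s=-\log r$. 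This exactly matches the hypothesis of Theorem~\ref{L1Linfthm} with $\de=1$ (which demands the $L^1$ norm be $\leq 4\pi t/(1+\de)=2\pi t$), so the theorem yields $u(t)\leq C\al\tilde h$ on $B$ for a universal $C$. Direct expansion of $\al$ gives $\log\al=\tfrac{2}{t}+2\log t+O(1)$; combining this with the universal bound on $\tilde h$ over $B_{1/2}$ (for instance $\tilde h\leq 64/9$) and taking logarithms gives $\log u\leq\tfrac{2}{t}+C'$ on $B_{1/2}$, since $2\log t\leq 0$ for $t\leq 1$.

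The one step requiring an actual calculation is the monotonicity of $r\mapsto h(r)/\tilde h(r)$ on $(0,1)$. Writing this ratio as $g(r)^2$ with $g(r)=\tfrac{1-r^2}{2r(-\log r)}$, the sign of $g'$ reduces to the sign of the auxiliary function $f(r):=(1-r^2)+(1+r^2)\log r$. A short computation shows $f(1)=f'(1)=f''(1)=0$ and $f'''(r)=2/r^3+2/r>0$ on $(0,1)$, which forces $f''<0$, hence $f'>0$, hence $f<f(1)=0$ throughout $(0,1)$. This monotonicity, together with the elementary verification that the chosen $\al$ satisfies $\al\geq 1$ for all $t\in(0,1)$, are the only technical—but routine—points in the argument.
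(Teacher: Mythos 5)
There is a fundamental mismatch here: the statement you were asked to prove is Theorem \ref{L1Linfthm} itself, the $L^1$--$L^\infty$ smoothing estimate for solutions of $\pt u=\lap\log u$ against the Poincar\'e metric $\tilde h$. Your proposal does not prove this; its very first sentence is ``apply Theorem~\ref{L1Linfthm} at each time $t$,'' i.e.\ you assume the statement as a black box and use it to derive a \emph{different} result, namely the bound $\log u\leq \frac{2}{t}+C$ on $B_{1/2}$ (which is Lemma \ref{lem:smoothing} of the paper). As a proof of Theorem \ref{L1Linfthm} this is circular and contains none of the required content: the theorem is a genuine a priori estimate for the logarithmic fast diffusion equation, and the paper itself does not prove it either --- it is quoted as a special case of Theorem 1.3 of \cite{TY}, where the actual argument lives. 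Nothing in your write-up engages with why an $L^1$-small excess of $u_0$ over $\al\tilde h$ forces a pointwise bound $u(t)\leq C\al\tilde h$ after the stated waiting time $\|(u_0-\al\tilde h)_+\|_{L^1(B)}(1+\de)/4\pi$.

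For what it is worth, the argument you \emph{did} write is essentially the paper's own proof of Lemma \ref{lem:smoothing}: the same choice $r_0=e^{-1/t}$, the same application with $\de=1$, and the same computation $\int_{B_{r_0}}h=2\pi/(-\log r_0)=2\pi t$. Your choice $\al=h(r_0)/\tilde h(r_0)$ differs only by the harmless factor $(1-r_0^2)^2/4$ from the paper's $\al=(r_0\log r_0)^{-2}$, and your monotonicity lemma for $h/\tilde h$ makes explicit a point the paper dispatches as ``coarse estimation'' (one needs $h\leq\al\tilde h$ outside $B_{r_0}$, including near $\partial B$ where $h\to\infty$). So that portion is sound --- it is just not a proof of the assigned statement.
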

The special case of our result that we have given here focuses on the relevant case for us that 
$\|(u_0-\al \tilde h)_+\|_{L^1(B)}$ is small, and indeed the theorem is vacuous unless 
$\|(u_0-\al \tilde h)_+\|_{L^1(B)}< 4\pi/(1+\de)$. See \cite{TY} for a more general statement.

\begin{proof}[{Proof of Lemma \ref{lem:smoothing}}]
For $t\in (0,\min\{1,T\})$, set $r=e^{-1/t}\in (0,e^{-1}]$ and
$$\al=\left(\frac{1}{r\log r}\right)^2\leq \frac{1}{r^2}=e^{2/t}>1.$$
Coarse estimation and precise computation (see Figure \ref{fig3})
gives
$$\|(u_0-\al \tilde h)_+\|_{L^1(B)}\leq \|h-\al \|_{L^1(B_{r})}
=\frac{2\pi}{-\log r}-\frac{\pi}{(\log r)^2}
\leq \frac{2\pi}{-\log r}=2\pi t,$$
and so Theorem \ref{L1Linfthm} applies with $\de=1$ to give
$$u(t)\leq C\al \tilde h = Ce^{2/t}\tilde h,$$
and hence
$$\log u(t)\leq C+\log \tilde h +\frac{2}{t},$$
throughout $B$, for universal constants $C$. 
\end{proof}

\begin{figure}
\centering
\begin{tikzpicture}[xscale=5.7,yscale=0.07]

\newcommand*{\rlimit}{0.89}
\newcommand*{\tval}{0.6}
\newcommand*{\rval}{(exp(-1.0/\tval))}
\newcommand*{\rvalinv}{(exp(1.0/\tval))}
\newcommand*{\alphaval}{1/((\rval*(ln(\rval)))^2)}
\newcommand*{\axisheight}{90}

\draw [<->] (0,\axisheight) -- (0,0) -- (1.1,0);
\draw [->] (0,0) -- (-1.1,0);

\draw[ultra thick, domain=0.0322:\rlimit, samples=6*\sampno] 
plot (\x, {
1/(\x * ln(\x))^2
}) node[above]{$h$};

\draw[ultra thick, domain=-\rlimit:-0.03, samples=6*\sampno] 
plot (\x, {
1/(\x * ln(-\x))^2
});

\draw[ultra thick, domain=-0.6:0.6, samples=6*\sampno] 
plot (\x, {
(4*\alphaval)/((1-\x * \x)*(1-\x * \x))
}) node[above]{$\al \tilde{h}$};

\draw [dashed, thin] ({\rval},0) node[below]{$r$} -- ({\rval},{(\rvalinv/ln(\rval))^2});
\draw [dashed, thin] ({-\rval},0) -- ({-\rval},{(\rvalinv/ln(\rval))^2});

\draw [dashed, thin] ({exp(-1)},0) node[below]{$e^{-1}$} -- ({exp(-1)},
{(exp(1))^2});

\draw [dashed, thin] (0,{\alphaval}) node[below left]{$\al$} -- ({\rval},{\alphaval});
\draw [dashed, thin] (0,{\alphaval}) -- (-{\rval},{\alphaval});

\draw [dashed, thin] (-1,{4*\alphaval}) node[left]{$4\al$} -- (1,{4*\alphaval});

\draw [dashed, thin] (-1,0) node[below]{$-1$} -- (-1,\axisheight);
\draw [dashed, thin] (1,0) node[below]{$1$} -- (1,\axisheight);

\draw[thick, domain=-1:1, samples=6*\sampno] 
plot (\x, {
2+5*\x*\x+2*sin(deg(10*\x*\x))+0.04/(0.0006+(\x * \x))
}) node[right]{$u_0$};

\end{tikzpicture}
\caption{Hyperbolic metrics}
\label{fig3}
\end{figure}
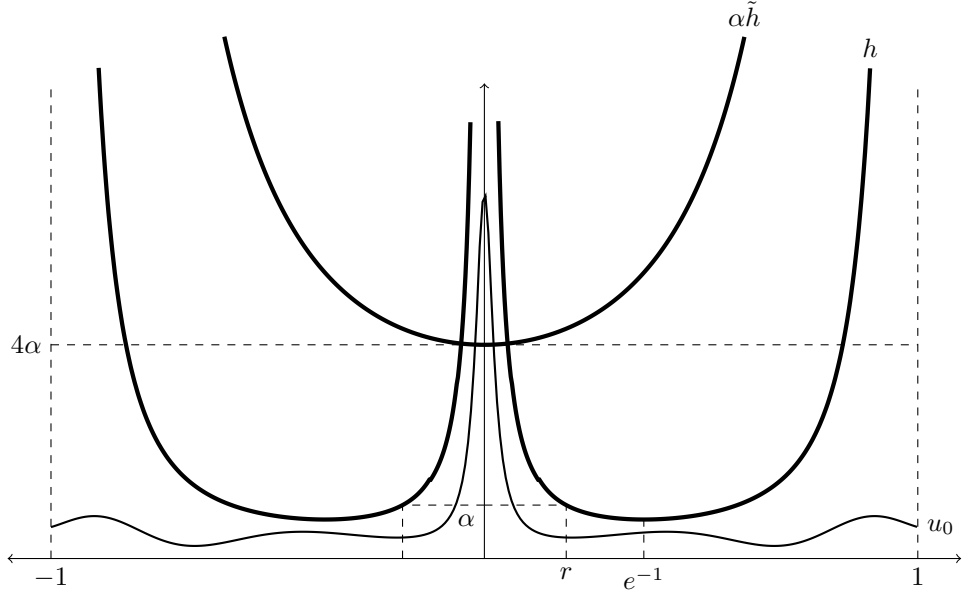

Note that the upper bound deteriorates as $t\downto 0$. However, we can apply it to the solutions $u_n$ to give
\beq
\label{un_upper}
\log u_n(t)\leq \frac{2}{t} +C, 
\eeq
on $B_{1/2}$, for $t\in (0,1)$. 
From time $t=1$ onwards, we can then compare $u_n$ against the Ricci flow $(M+2t)\hat h$, where $\hat h$ is the complete hyperbolic (Poincar\'e) metric on $B_{1/2}$ and $M$ is chosen sufficiently large so that this flow majorises $u_n$ at time $t=1$.
Together with Lemma \ref{upper_and_lower_un}, we then conclude that
for all $\ck\subset\subset B$ and $\ep>0$, we have  upper and lower bounds for $u_n$ on $\ck\times [\ep,1/\ep]$ that are uniform in $n$.
Standard parabolic regularity theory then 
generates the higher-order estimates from which (a) follows.
More precisely, we have
a choice of applying parabolic theory to the equation \eqref{eqn:rfu} for $u_n$ or \eqref{eqn:rfv} for $v_n:=\half\log u_n$. In the former case, one can start with \cite[Theorem III.10.1]{Lady}, and in the latter case with \cite[Theorem V.1.1]{Lady}, followed by Schauder estimates.

To prove (b), it suffices to establish upper and lower bounds for the conformal factors $u_n$ on an arbitrary $\ck\subset\subset B\backslash\{0\}$ that are uniform all the way back to time $t=0$, because for sufficiently large $n$, we have $u_n(0)=h$ on $\ck$, and parabolic regularity theory will then give uniform higher-order estimates for $u_n$ down to $t=0$, which then imply that the convergence $u_n\to u_{cc}$
is smooth local convergence all the way down to $t=0$.
This time, the bounds for the conformal factor are immediate 
from Lemma \ref{upper_and_lower_un},
which gives
$$\sup_{\ck\times [0,1]}u_n \leq \sup_{\ck}3h<\infty.$$ 
%

Finally we turn to (c).
Because the flows $u_n(t)$ are increasing with $n$, we know that $u_n\leq u_{cc}$. 
On the other hand, by passing to the limit $n\to\infty$ in \eqref{unh}, 
we have $u_{cc}(t)\leq (1+2t)h$. Using these inequalities, we can estimate 
\begin{equation}
\label{triangleineq}
\begin{aligned}
|u_{cc}(t)-h|&\leq |u_{cc}(t)-(1+2t)h|+2th = \left((1+2t)h-u_{cc}(t)\right)+2th\\
&\leq \left(h-u_n(t)\right)+4th\\ 
&\leq |h-u_n(0)|+|u_n(0)-u_n(t)|+4th.
\end{aligned}
\end{equation}
Now for any $\ck\subset\subset B$ and $\ep>0$, by \eqref{eqn:approx} we can fix $n$ large so that 
$$\int_\ck|h-u_n(0)|\leq \ep/3.$$
For this fixed $n$, by smoothness of $u_n$, there exists $t_0>0$ small enough so that
$$\int_\ck |u_n(0)-u_n(t)|\leq \ep/3\quad\text{ for all }t\in [0,t_0],$$
and 
$$4t_0\int_\ck h\leq \ep/3.$$
Therefore, by integrating \eqref{triangleineq}, we have
$$\int_\ck |u_{cc}(t)-h|\leq \ep,$$
for $t\in [0,t_0]$,
which is enough to conclude (c).
\begin{rmk}\label{rmk:lower}
	In the limit $n\to\infty$, the uniform lower bound  
$u_n(t)\geq e^2$ from \eqref{un_lower} implies a lower bound of $u_{cc}$, i.e. $u_{cc}(\bx,t)\geq e^2$ for all $\bx\in B$ and $t\geq 0$.
\end{rmk}

\begin{rmk}\label{rmk:upper}
	We can also
take a limit $n\to\infty$ in the uniform upper bound \eqref{un_upper}
to deduce the upper bound of Theorem \ref{secondthm}.
\end{rmk}

\brmk
\label{new_sandwich_rmk}
Additional  upper and lower bounds for $u_{cc}(t)$ hold thanks to 
Lemma \ref{max_stretch_lem}. First, observe that
$\tilde h\leq h$ either by computing or by applying
the comparison principle.
Hence by construction of $u_n$ we have $\tilde h\leq u_n(0)$ throughout $B$.
Therefore,
we have $(1+2t)\tilde h\leq u_{n}(t)$, and in the limit $n\to\infty$, this gives $(1+2t)\tilde h\leq u_{cc}(t)$. Meanwhile, by comparing $u_{cc}(t)$  and $(1+2t)h$ on $B\backslash\{0\}$, or simply passing to the limit $n\to\infty$ in \eqref{unh} as in the proof of (c), we obtain
$u_{cc}(t)\leq (1+2t)h$.
\ermk

\subsection{Uniqueness}
\label{unique_sect}

In this section, we 
prove the uniqueness assertion of Theorem \ref{exist_unique}.
One implication of this uniqueness result is that we have some freedom in choosing the approximation sequence $u_n$ used in the construction of $g_{cc}$.
The proof will use the following result that follows easily from 
Lemma 5.1 of \cite{ICRF_UNIQ}. 
\begin{lemma}
\label{even_stronger_again_lemma}
Suppose $1/2<r_0<r_0^{1/3}<R<1$, and $\gamma\in (0,\half)$.
Suppose $g_1(t)=u_1(t)(dx^2+dy^2)$ is any complete Ricci flow on $B$, 
and $g_2(t)=u_2(t)(dx^2+dy^2)$ is any Ricci flow 
on $\overline {B_R}$, both for $t\in (0,T]$. 
Then we have for all $t\in (0,T]$ that
\begin{equation}
\begin{aligned}
\left[\int_{B_{r_0}}(u_2(t)-u_1(t))_+d\bx\right]^{\frac{1}{1+\ga}}
&\leq
\liminf_{s\downto 0}
\left[\int_{B_{R}}(u_2(s)-u_1(s))_+d\bx\right]^{\frac{1}{1+\ga}}\\
&\quad+C(\ga)
\left[\frac{t}{
(-\log r_0)\left[\log(-\log r_0)-\log(-\log R)\right]^\ga}
\right]^{\frac{1}{1+\ga}}.
\end{aligned}
\end{equation}
\end{lemma}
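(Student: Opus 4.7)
The plan is to apply Lemma 5.1 of \cite{ICRF_UNIQ} --- a general $L^1$-type comparison estimate for Ricci flows against a compactly supported cutoff $\varphi\geq 0$ --- to a cutoff function tailored to the doubly-logarithmic scale intrinsic to the hyperbolic-cusp geometry. That lemma yields, schematically, an inequality of the form
\begin{equation*}
\left[\int_{B}(u_2-u_1)_+\varphi\, d\bx\right]^{\frac{1}{1+\ga}}(t) \leq \liminf_{s\downto 0}\left[\int_{B}(u_2-u_1)_+\varphi\, d\bx\right]^{\frac{1}{1+\ga}}(s) + C(\ga)\bigl(t\cdot N(\varphi)\bigr)^{\frac{1}{1+\ga}},
\end{equation*}
where $N(\varphi)$ is a specific integral functional of derivatives of $\varphi$ against Euclidean measure. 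The completeness of $g_1$ on $B$ is what permits the argument despite $g_2$ only being defined on $\overline{B_R}$: since $\varphi$ will be chosen to vanish near $\partial B_R$, no boundary terms appear, and the completeness of $g_1$ handles the missing region $B\setminus B_R$ without any escape-to-infinity issue.

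The key step is the choice of cutoff so that $\varphi\equiv 1$ on $B_{r_0}$, $\varphi\equiv 0$ outside $B_R$, and $N(\varphi)$ matches the stated bound. I would take
\begin{equation*}
\varphi(\bx) = \Phi\!\left(\frac{\log(-\log r)-\log(-\log R)}{\log(-\log r_0)-\log(-\log R)}\right), \qquad r=|\bx|,
\end{equation*}
where $\Phi:\R\to[0,1]$ is a fixed smooth nondecreasing function equal to $0$ on $(-\infty,0]$ and $1$ on $[1,\infty)$. Writing $L := \log(-\log r_0)-\log(-\log R)$, the hypothesis $r_0^{1/3}<R$ forces $L\geq \log 3$, keeping $\varphi$ and its derivatives under control. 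The chain rule then gives $|\nabla\varphi|\lesssim [r(-\log r)L]^{-1}$ and an analogous bound for $|\Delta\varphi|$ on the transition annulus $B_R\setminus B_{r_0}$. Integration in polar coordinates, using the identity
\begin{equation*}
\int_{r_0}^R \frac{dr}{r(-\log r)^2} = \frac{1}{-\log R}-\frac{1}{-\log r_0} \lesssim \frac{1}{-\log r_0}
\end{equation*}
(valid because $r_0>1/2$ makes all the logarithmic quantities comparable), together with $\int_{r_0}^R dr/[r(-\log r)] = L$, produces the claimed factor $1/[(-\log r_0)\, L^{\ga}]$ after the H\"older-type manipulation inside Lemma 5.1 distributes the exponent $1/(1+\ga)$.

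The hard part is not the invocation of Lemma 5.1, which is quoted directly, but rather the calibration of $\varphi$ so that $N(\varphi)$ yields exactly the powers of $L$ and $(-\log r_0)$ stated: the $L^{\ga}$ (rather than $L^{\ga/(1+\ga)}$ or $L^{1}$) emerges from the specific form of the functional $N(\varphi)$ in Lemma 5.1 combined with the placement of the $(1+\ga)$-th power, and matching these powers requires tracking exponents carefully through the integration. Once this bookkeeping is done, integration in $t\in(s,t]$ and $\liminf_{s\downto 0}$ give the stated conclusion directly.
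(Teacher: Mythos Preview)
Your approach matches the paper's: the paper does not give a detailed proof of this lemma at all, stating only that it ``follows easily from Lemma 5.1 of \cite{ICRF_UNIQ}.'' Your proposal --- applying that lemma with a cutoff built on the doubly-logarithmic variable $\log(-\log r)$, so that the transition annulus has width $L=\log(-\log r_0)-\log(-\log R)$ on that scale --- is exactly the natural way to flesh out what the paper leaves implicit, and is consistent with how the constants $(-\log r_0)$ and $L^\ga$ must arise.

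One small caution on your bookkeeping: the displayed estimate
\[
\int_{r_0}^R \frac{dr}{r(-\log r)^2} = \frac{1}{-\log R}-\frac{1}{-\log r_0} \lesssim \frac{1}{-\log r_0}
\]
is not correct as written, since the hypothesis $r_0^{1/3}<R$ forces $-\log r_0 > 3(-\log R)$, so $1/(-\log R)$ dominates $1/(-\log r_0)$ rather than being comparable to it. The role of the hypothesis $r_0>1/2$ is only to bound $-\log r_0$ (hence also $-\log R$) above by $\log 2$, not to make the two comparable. This does not affect the overall strategy, but when you match exponents against the precise functional $N(\varphi)$ in Lemma 5.1 you will need to track which of $-\log r_0$ and $-\log R$ actually survives; the final bound in the lemma involves $-\log r_0$, so the relevant integrals must be organised accordingly.
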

\begin{cor}
\label{uniq_cor}
Suppose $g(t)=u(t)(dx^2+dy^2)$ and $\tilde g(t)=\tilde u(t)(dx^2+dy^2)$
are complete Ricci flows on $B$ for $t\in (0,T]$ with 
$u(t)-\tilde u(t)\to 0$ in $L^1_{loc}$ as $t\downto 0$.
Then $g(t)=\tilde g(t)$ for all $t\in (0,T]$.
\end{cor}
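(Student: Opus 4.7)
The strategy is to apply Lemma \ref{even_stronger_again_lemma} directly, with $(u_1,u_2)=(u,\tilde u)$, and then with the roles swapped. The two flows are both complete Ricci flows on $B$, so in particular $\tilde u(t)$ is a Ricci flow on $\overline{B_R}$ for any $R<1$, and the hypotheses of the lemma are met for any admissible choice of $r_0,R$.

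First, I would observe that the $L^1_{loc}$ hypothesis on $u-\tilde u$ forces the $\liminf$ term on the right-hand side of the lemma to vanish: for every $R\in(1/2,1)$,
\begin{equation*}
\liminf_{s\downto 0}\int_{B_R}(\tilde u(s)-u(s))_+\,d\bx\leq\lim_{s\downto 0}\int_{B_R}|u(s)-\tilde u(s)|\,d\bx=0.
\end{equation*}
Hence for any admissible pair $(r_0,R)$ and for $\gamma\in(0,\tfrac12)$, the lemma reduces to
\begin{equation*}
\int_{B_{r_0}}(\tilde u(t)-u(t))_+\,d\bx \leq C(\ga)^{1+\ga}\,\frac{t}{(-\log r_0)\bigl[\log(-\log r_0)-\log(-\log R)\bigr]^\ga}.
\end{equation*}

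Next I would fix $r_0\in(1/2,1)$ and let $R\uparrow 1$. Since $-\log R\downto 0$, we have $\log(-\log R)\to -\infty$, so the bracket $\log(-\log r_0)-\log(-\log R)$ tends to $+\infty$, and the right-hand side of the displayed inequality tends to zero. This gives $\int_{B_{r_0}}(\tilde u(t)-u(t))_+\,d\bx=0$, i.e.\ $\tilde u(t)\leq u(t)$ a.e.\ on $B_{r_0}$. Since $r_0\in(1/2,1)$ was arbitrary and $B=\bigcup_{r_0<1}B_{r_0}$, we conclude $\tilde u(t)\leq u(t)$ on $B$ for every $t\in(0,T]$. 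Swapping the roles of $u$ and $\tilde u$ in Lemma \ref{even_stronger_again_lemma} (which is symmetric in the two flows, as both are assumed complete) yields the reverse inequality, so $u(t)=\tilde u(t)$ throughout $B\times(0,T]$.

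There is really no obstacle here beyond bookkeeping: the content of the uniqueness statement is entirely packaged in Lemma \ref{even_stronger_again_lemma}, and the proof amounts to extracting the correct limits $R\uparrow 1$ and then varying $r_0$. The only point worth being careful about is that one must let $R\uparrow 1$ \emph{before} letting $r_0\uparrow 1$, since the estimate degenerates when $r_0$ and $R$ collide; keeping $r_0$ strictly bounded away from $R$ while $R\to 1$ is exactly what the hypothesis $r_0<r_0^{1/3}<R<1$ permits.
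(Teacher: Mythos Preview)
Your proof is correct and follows essentially the same route as the paper's: apply Lemma~\ref{even_stronger_again_lemma} (the paper fixes $\gamma=1/4$, but any $\gamma\in(0,\tfrac12)$ works), use the $L^1_{loc}$ hypothesis to kill the $\liminf$ term, send $R\uparrow 1$ so the error term vanishes, then let $r_0\uparrow 1$ and swap the roles of the two flows. Your closing remark about the order of the limits is exactly the point, and matches the paper's treatment.
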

Clearly the uniqueness assertion of Theorem \ref{exist_unique} is just one instance of this general corollary.
\begin{proof}[Proof of Corollary \ref{uniq_cor}]
First, we apply Lemma \ref{even_stronger_again_lemma} with 
$g_1(t)=\tilde g(t)$ and $g_2(t)=g(t)$
and $\ga=1/4$ to establish that
\begin{equation*}
\int_{B_{r_0}}(u(t)-\tilde u(t))_+d\bx
\leq C
\frac{t}{
(-\log r_0)\left[\log(-\log r_0)-\log(-\log R)\right]^{1/4}}
\end{equation*}
for all $t\in (0,T]$ and $1/2<r_0<r_0^{1/3}<R<1$.
By taking the limits $R\upto 1$ and then $r_0\upto 1$, we find that
$u(t)\leq \tilde u(t)$ throughout $B$, for all $t\in (0,T]$.
By repeating the argument with $g(t)$ and $\tilde g(t)$ interchanged, we conclude that $g(t)=\tilde g(t)$.
\end{proof}

\subsection{Proof of Theorem \ref{secondthm}}

We have already proved the upper bound of Theorem \ref{secondthm} -- see Remark \ref{rmk:upper}.
For the lower bound, we use the approximation in Section \ref{sec:approx} again. The advantage is that the approximations $u_n:B\to (0,\infty)$ are envelopes of families of cigar metrics whose evolution under the Ricci flow can be written down explicitly. More precisely, the complete Ricci flow starting from $\tilde{u}_{r_0}$ is given by
\begin{equation}\label{eqn:cigarsolution}
	\tilde{u}_{r_0}(\bx,t)=\frac{\varepsilon(r_0)}{\delta(r_0) e^{4t/\varepsilon(r_0)}+ r^2}.
\end{equation}
By Remark \ref{max_stretch_conseq},
we have for $t>0$,
\begin{equation*}
	u_{cc}(0,t)= \lim_{n\to \infty} u_n(0,t) = \sup_{r_0\in (0,e^{-1})} u_{r_0}(0,t).
\end{equation*}
By the definition of $u_{r_0}$, we know that $\tilde{u}_{r_0}(\bx)\leq u_{r_0}(\bx)$ 
and hence, by Lemma \ref{max_stretch_lem},  $\tilde{u}_{r_0}(\bx,t)\leq u_{r_0}(\bx,t)$, which implies
\begin{equation*}
	u_{cc}(0,t)\geq  \max_{r_0\in (0,e^{-1})} \tilde{u}_{r_0}(0,t).
\end{equation*}
Thanks to the explicit formula \eqref{eqn:cigarsolution}, and \eqref{eqn:delta} and \eqref{eqn:varepsilon}, we can compute
\begin{equation}
\label{logexpression}
	-\log \tilde{u}_{r_0}(0,t) = 2 \log r_0 + \log (-\log r_0) + 4t (-\log r_0)(-\log r_0-1).
\end{equation}
We would like to maximise this over $r_0\in (0,e^{-1})$, so we compute
$$\frac{\partial}{\partial r_0}(-\log \tilde{u}_{r_0}(0,t))
=\frac{(-2\log r_0-1)}{r_0}\left[\frac{1}{-\log r_0}-4t\right],$$
and deduce that for $t\in (0,1/4)$ we should set $r_0=e^{-1/(4t)}$ 
in \eqref{logexpression} to obtain the optimal estimate
\begin{equation*}
	u_{cc}(0,t)\geq 4t e^{\frac{1}{4t}+1}.
\end{equation*}
The lower bound in Theorem \ref{secondthm} then follows from the fact that $v_{cc}(t)=\frac{1}{2}\log u_{cc}(t)$.


\section{A Li-Yau differential Harnack estimate and the curvature upper bound}

In this section, we prove a new Li-Yau Harnack estimate for the Ricci flow, Theorem \ref{thm:liyau}, which allows us to prove the curvature upper bound in Theorem \ref{mainthm}. 
Compared with other known Li-Yau Harnack inequalities \cite{CaoHamilton}, 
our argument in Section \ref{sec:liyau} will exploit an additional geometric property of the initial metric (the hyperbolic cusp metric $h(dx^2+dy^2)$), which is given by the inequality

\begin{equation}\label{eqn:special}
	h^{-1} \abs{\nabla \log \left( \frac{1}{2} \log h\right)}^2\leq C
\end{equation}
for some universal constant $C>0$, as formally asserted in Lemma \ref{lem:Fn}. We note that the computation and argument in Section \ref{sec:liyau} still works without this extra information, to give an upper bound of curvature, but it is not the sharp bound we give here.

In fact, we will not apply the Li-Yau-type estimate starting at $t=0$, but at some later time.
This gives rise to a technical issue that we want \eqref{eqn:special} to remain true for $u_{cc}(t)$ for a period of time instead of just for $t=0$. This is proved in Lemma \ref{lem:special} by using the approximations $u_n$ defined in Section \ref{sec:approx}, albeit 
only on $B_\half$.

\subsection{Gradient bounds for the approximations $u_n$}
The aim of this section is to prove Lemma \ref{lem:special}, which is a gradient bound property of $u_{cc}$ alone. However, the proof relies on the special choice of approximations $u_n$ in Section \ref{sec:approx}. In some sense, $u_n$ smooths out the singularity of $h$ at the origin while keeping \eqref{eqn:special} valid. 

In what follows, we use the subscript $u$ in $\abs{\nabla f}_u$, $\triangle_u$ and $\langle \cdot,\cdot \rangle_u$ to indicate that the gradient norm, the Laplacian and the inner product are taken with respect to the metric $u (dx^2+dy^2)$, whereas no subscript means that the gradient norm, the Laplacian and the inner product are those of the flat metric $dx^2+dy^2$ on $B$. 


\begin{lemma}\label{lem:Fn}
There exists a universal constant $C>0$ such that \eqref{eqn:special} holds, and also so that 
\begin{equation}\label{Fn_ineq}
	F_n:= \abs{\nabla f_n}_{u_n}^2 \leq C
\end{equation}
throughout $B$, where $f_n:= \log v_n$, $v_n:=\frac{1}{2}\log u_n\geq 1$ by \eqref{eqn:minimal}, and $u_n:B\to (0,\infty)$ is defined as in Section \ref{sec:approx}.
\end{lemma}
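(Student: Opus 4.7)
The plan is to use the identity
\[
|\nabla f|_u^2 = \frac{|\nabla \log u|_u^2}{(\log u)^2},\qquad f := \log\bigl(\tfrac12\log u\bigr),
\]
valid for any conformal factor $u > 0$ with $\log u > 0$, and to evaluate the right-hand side separately on the two regions $\{r_n < r < 1\}$ and $\{r \leq r_n\}$ of $B$ on which $u_n$ equals $h$ and the cigar conformal factor $\tilde u_{r_n} = \varepsilon/(\delta + r^2)$ respectively; one then glues using the $C^1$-regularity of $u_n$ across $\{r = r_n\}$.

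For the cusp region I set $L := -\log r > 0$, so that $\tfrac12 \log h = L - \log L$, and a direct computation gives
\[
|\nabla \log h|_h^2 = 4(L-1)^2,\qquad\text{so}\qquad h^{-1}\bigl|\nabla \log\bigl(\tfrac12\log h\bigr)\bigr|^2 = \frac{(L-1)^2}{(L - \log L)^2}.
\]
Since $L \mapsto L - \log L$ attains its minimum value $1$ at $L = 1$, the ratio on the right is a continuous function of $L \in (0,\infty)$ vanishing at $L = 1$, tending to $1$ as $L \to \infty$, and tending to $0$ as $L \to 0^+$; hence it is bounded by a universal constant. This proves \eqref{eqn:special} and at the same time bounds $F_n$ on the cusp piece $\{r_n < r < 1\}$ of $B$.

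For the cigar piece I compute
\[
F_n(r) = \frac{4r^2}{\varepsilon(\delta + r^2)\bigl[\log(\varepsilon/(\delta+r^2))\bigr]^2}.
\]
Writing $L_n := -\log r_n > 1$, the explicit formulas \eqref{eqn:delta} and \eqref{eqn:varepsilon} give $1/\varepsilon = (L_n - 1) L_n$, and since $\delta + r^2$ is increasing in $r$ we have
\[
\frac{r^2}{\delta + r^2} \leq \frac{r_n^2}{\delta + r_n^2} = \frac{L_n - 1}{L_n},\qquad \log\bigl(\varepsilon/(\delta + r^2)\bigr) \geq \log\bigl(\varepsilon/(\delta + r_n^2)\bigr) = 2(L_n - \log L_n).
\]
Substituting yields
\[
F_n(r) \leq \frac{(L_n - 1)^2}{(L_n - \log L_n)^2},
\]
which is the same expression as in the cusp region with $L$ replaced by $L_n$, hence bounded by the same universal constant.

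Continuity of $F_n$ across $\{r = r_n\}$ is automatic because $u_n \in C^1(B)$ and $F_n$ depends only on $u_n$ and $\nabla u_n$; as a consistency check, both pieces evaluate to $(L_n - 1)^2/(L_n - \log L_n)^2$ at the matching circle. Combining the two regions gives a single universal $C$ for which both \eqref{eqn:special} and \eqref{Fn_ineq} hold. The only real obstacle is bookkeeping: one must pick a common parametrisation so that both estimates manifest themselves in the same form, but the built-in tangency of the cigar and the cusp at $r = r_n$ makes this essentially automatic.
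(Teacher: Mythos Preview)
Your proof is correct and follows essentially the same approach as the paper: compute the quantity explicitly on the cusp region in terms of $L=-\log r$, then on the cigar region show that $F_n$ is controlled by its value at the matching circle $r=r_n$. The only stylistic difference is that the paper simply observes that each factor in the expression for $F_n$ on the cigar is monotone increasing in $r$ (after checking $\delta+r^2<\varepsilon$ so the logarithm stays positive), whereas you bound each factor separately by its value at $r_n$ and then verify explicitly that this reproduces the cusp expression $(L_n-1)^2/(L_n-\log L_n)^2$; but these are the same argument.
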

In other words, not only does \eqref{eqn:special} hold for $h$, it also holds for the approximations $u_n$.
Since we have explicit formulae for $h$ and $u_n$, the proof is a straightforward computation and is moved to the appendix.

The estimate \eqref{Fn_ineq} holds not only for $u_n$, but also for its evolution $u_n(t)$, with $t\in (0,1]$, and the corresponding $F_n=F_n(t)$. 
(Note that the evolving $F_n$ makes sense because $v_n(t)=\half\log u_n(t)\geq 1$ by 
\eqref{un_lower}.)
This is proved using the maximum principle for domains with boundary. Therefore we must argue first that this $F_n$ is bounded on the boundary of $B_{1/2}$ for $t\in [0,1]$. This is the aim of the next 
lemma.

\begin{lemma}\label{lem:Fnboundary}
	There exists a universal constant $C>0$ such that 
	\begin{equation*}
		\sup_{\partial B_{1/2}\times [0,1]} F_n \leq C
	\end{equation*}
	where 
	$F_n:= \abs{\nabla f_n}_{u_n}^2$,  	
	$f_n=\log (\frac{1}{2}\log u_n)$ and 
	$u_n=u_n(t)$ is the Ricci flow evolution of $u_n:B\to (0,\infty)$.
\end{lemma}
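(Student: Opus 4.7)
The plan is to reduce the bound on $F_n$ to a uniform $C^1$ bound on $v_n := \half\log u_n$ on a neighborhood of $\partial B_{1/2}$, which will follow from standard interior parabolic regularity applied to the equation $\partial_t v_n = e^{-2v_n}\Delta v_n$. Note that at $t=0$ the conclusion is already given by Lemma~\ref{lem:Fn}, so only the case $t \in (0,1]$ needs attention.

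Since $r_n \downto 0$, I fix $n_0$ with $r_n < 1/4$ for all $n \geq n_0$. Then $u_n(0)$ coincides with $h$ on the annulus $A := B_{3/4} \setminus \overline{B_{1/4}}$, so the initial data is smooth there with derivative bounds independent of $n$. Combined with the uniform estimate $e^2 \leq u_n(t) \leq 3h$ on $A \times [0,1]$ from Lemma~\ref{upper_and_lower_un} (and the fact that for $t\in[0,1]$ the factor $(1+2t)\leq 3$), this renders $\partial_t v_n = e^{-2v_n}\Delta v_n$ uniformly parabolic with smooth bounded initial data on $A \times [0,1]$. Interior parabolic regularity (e.g.\ \cite[Theorem V.1.1]{Lady} for a uniform $C^\alpha$ bound, followed by parabolic Schauder estimates) then delivers a universal constant $C'$ with $|\nabla v_n| \leq C'$ on a smaller annulus containing $\partial B_{1/2}$, uniformly for $t \in [0,1]$ and $n \geq n_0$.

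Since $\nabla f_n = v_n^{-1}\nabla v_n$, the desired bound is then immediate from
\[
F_n \;=\; |\nabla f_n|_{u_n}^2 \;=\; u_n^{-1}\,\frac{|\nabla v_n|^2}{v_n^2} \;\leq\; e^{-2}|\nabla v_n|^2,
\]
using $u_n \geq e^2$ and $v_n \geq 1$ from \eqref{un_lower}. The finitely many exceptional indices $n < n_0$ are handled individually on the compact set $\partial B_{1/2} \times [0,1]$, and one takes the maximum to obtain a single universal $C$. The main obstacle, and the reason for the slightly awkward setup, is producing $n$-uniform parabolic estimates right down to $t=0$: this forces us to work on a region (the annulus $A$) strictly larger than where the gradient bound is needed, so that the smooth initial data $h$ is available on a genuine neighborhood of the target set and no boundary-layer issues arise at $t=0$.
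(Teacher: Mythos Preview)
Your proof is correct and follows essentially the same approach as the paper: both use Lemma~\ref{upper_and_lower_un} to obtain uniform two-sided bounds on $u_n$ over an annulus containing $\partial B_{1/2}$, observe that $u_n(0)=h$ there for large $n$, and then invoke interior parabolic regularity (the same references) to get uniform gradient bounds down to $t=0$. The only cosmetic differences are that the paper works with the equation for $u_n$ rather than $v_n$, uses the slightly smaller annulus $B_{5/8}\setminus B_{3/8}$, and leaves the handling of small $n$ implicit.
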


\begin{proof}
Lemma  \ref{upper_and_lower_un} tells us that 
there exists a universal constant $C>0$ such that 
\begin{equation}\label{eqn:uc0}
e^2 \leq u_n(\by,t)\leq C 	
\end{equation}
for all $t\in [0,1]$ and $\by\in B_{5/8}\setminus B_{3/8}$.
By the definition of $f_n$ and \eqref{eqn:uc0}, we know it suffices to prove
	\begin{equation*}
		\sup_{\partial B_{1/2}\times [0,1]} \abs{\nabla u_n}^2
		\leq C.
	\end{equation*}
	Indeed, we have $\norm{u_n}_{C^{2,\alpha}(B_{17/32}\setminus B_{15/32}\times [0,1])}$ for some $\alpha\in (0,1)$ bounded by a universal number as well. To see this, we first note that $u_n(0)$ is nothing but $h$ on the annulus 
$B_{5/8}\setminus B_{3/8}$ for large $n$, hence we have uniform bounds of all derivatives at $t=0$. 
We will then be able to appeal to parabolic regularity theory as before:
The estimate \eqref{eqn:uc0}
implies that $\partial_t u_n = \triangle \log u_n$ is uniformly parabolic on the annulus 
$B_{5/8}\setminus B_{3/8}$
and hence $\norm{u_n}_{C^\alpha(B_{9/16}\setminus B_{7/16}\times [0,1])}$ is uniformly bounded.  The rest follows from the linear Schauder theory. 
\end{proof}

Finally, we can prove the property mentioned at the beginning of this section.
\begin{lemma}
	There exists a universal constant $C_1>0$ such that
	\begin{equation*}
		\abs{\nabla \log v_{cc}}^2_{u_{cc}}\leq C_1
	\end{equation*}
	on $B_{1/2}\times (0,1]$.	
	\label{lem:special}
\end{lemma}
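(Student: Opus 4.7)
The plan is to first establish the bound for the approximations $u_n$ on the parabolic cylinder $\overline{B_{1/2}}\times[0,1]$ via a maximum principle argument, and then pass to the smooth local limit. Set $F_n := \abs{\nabla f_n}^2_{u_n}$ with $f_n = \log v_n$, $v_n = \half \log u_n$; since $v_n(\cdot,t)\geq 1$ by \eqref{un_lower}, the quantities $f_n$ and $F_n$ are smooth and finite on $B\times[0,1]$.

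The central step is to derive a favourable evolution inequality for $F_n$ under the Ricci flow $g_n(t) = u_n(t)(dx^2+dy^2)$. Writing the flow as $\pt v_n = \Delta_{g_n} v_n = -K_n$ and applying the chain rule, one checks directly that $\pt f_n - \Delta_{g_n} f_n = F_n$. Combining this with the $2$D Bochner identity
\beq\nonumber
\Delta_{g_n}\abs{\nabla f_n}^2_{g_n} = 2\abs{\Hess f_n}^2_{g_n} + 2\langle\nabla f_n,\nabla \Delta_{g_n} f_n\rangle_{g_n}+2K_n F_n,
\eeq
and with the metric variation $\pt g_n^{ij} = 2K_n g_n^{ij}$ that produces $\pt F_n = 2K_n F_n + 2\langle \nabla f_n,\nabla \pt f_n\rangle_{g_n}$, the curvature contributions cancel and I expect to obtain the drift--diffusion inequality
\beq\nonumber
(\pt - \Delta_{g_n})F_n - 2\langle \nabla f_n,\nabla F_n\rangle_{g_n} = -2\abs{\Hess f_n}^2_{g_n}\leq 0.
\eeq

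Next I would apply the weak parabolic maximum principle to this inequality on $\overline{B_{1/2}}\times[0,1]$; the smooth but possibly $n$-dependent drift $-2\nabla f_n$ causes no difficulty after the standard perturbation $F_n\mapsto F_n+\ep t$. Lemma \ref{lem:Fn} bounds $F_n(\cdot,0)$ by a universal constant, and Lemma \ref{lem:Fnboundary} bounds $F_n$ on the lateral boundary $\partial B_{1/2}\times[0,1]$ by a universal constant, so one concludes $F_n\leq C$ on $\overline{B_{1/2}}\times[0,1]$ uniformly in $n$.

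Finally, the smooth local convergence $u_n\to u_{cc}$ on $B\times(0,\infty)$ from Section \ref{sec:existence} implies smooth local convergence $F_n\to\abs{\nabla\log v_{cc}}^2_{u_{cc}}$ on $B\times(0,1]$ (using $u_{cc}\geq e^2$ from Remark \ref{rmk:lower} so that $\log v_{cc}$ is well defined), and the uniform bound passes to the limit, giving the claim with $C_1=C$. The main obstacle is the curvature cancellation that yields a non-positive right-hand side in the evolution equation for $F_n$; once that is verified, the remainder is standard given Lemmas \ref{lem:Fn} and \ref{lem:Fnboundary}.
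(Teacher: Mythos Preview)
Your proposal is correct and follows essentially the same route as the paper: derive the evolution equation $(\partial_t-\triangle_{u_n})F_n = -2\abs{\Hess f_n}^2_{u_n} + 2\langle\nabla f_n,\nabla F_n\rangle_{u_n}$, invoke Lemmas \ref{lem:Fn} and \ref{lem:Fnboundary} for the parabolic boundary data, apply the maximum principle on $\overline{B_{1/2}}\times[0,1]$, and pass to the smooth local limit. The paper records the drift term with the opposite sign, but your sign is the one that falls out of the Bochner computation you outlined, and in any case the sign of a first-order drift is irrelevant for the maximum principle.
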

\begin{proof}
	Since $u_n$ converges to $u_{cc}$ smoothly, locally on $B\times (0,1]$, it remains to prove $F_n\leq C_1$ on $B_{1/2}\times [0,1]$ where $F_n=\abs{\nabla f_n}^2_{u_n}$ is defined in Lemma \ref{lem:Fn}. 	
Direct computation shows that $F_n$ satisfies 
\begin{equation*}
	(\partial_t - \triangle_{u_{n}}) F_n = -2 \abs{\Hess f_n}^2_{u_{n}} - 2\langle\nabla F_n,\nabla f_n\rangle_{u_n}.
\end{equation*}
By Lemma \ref{lem:Fn} and Lemma \ref{lem:Fnboundary}, we can apply the classical maximum principle for domains with boundary  to $F_n$ to see that
\begin{equation}\label{eqn:FC1}
	\sup_{B_{1/2}\times [0,1]} F_n \leq C_1.
\end{equation}
\end{proof}

\subsection{A Li-Yau differential Harnack inequality}\label{sec:liyau}

Next, we present a Li-Yau type Harnack inequality for solutions of \eqref{eqn:rfv}. Note that $v_{cc}$ satisfies a `linear' heat equation with background metric evolving as a Ricci flow. Such Harnack inequalities are known in various cases, for example, Lemma 2.1 of \cite{CaoHamilton}. Here $v_{cc}$ is not only the solution to the heat equation, but also the conformal factor of the Ricci flow. This dual role of $v_{cc}$ helps us to remove the curvature assumption in \cite{CaoHamilton}. 


\begin{thm}[Li-Yau type estimate]
	\label{thm:liyau}
	Let $v(t)$ be a solution to \eqref{eqn:rfv} on $B_{1/2}\times [0,t_0]$ with $t_0<1/2$ and $u(t)=e^{2v(t)}$. If 
	\begin{equation}\label{eqn:assumption}
		v(t)\geq 1\quad \mbox{and} \quad \abs{\nabla \log v}_u^2\leq C_2
	\end{equation}
	on $B_{1/2}\times [0,t_0]$ for some $C_2>0$ and
	\begin{equation}\label{eqn:time}
		t_0\leq\frac{1}{8 \max_{B_{1/2}\times [0,t_0]} v},
	\end{equation}
	then there exists $C_3>0$ depending only on $C_2$ such that the Gauss curvature is controlled by
	\begin{equation*}
		K\leq \frac{C_3 v}{t}
	\end{equation*}
	on $B_{1/4}$ for $0<t\leq t_0$.
\end{thm}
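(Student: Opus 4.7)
The plan is a Li--Yau-type maximum principle argument, exploiting that $v$ itself satisfies the linear heat equation $\partial_t v = \Delta_u v$ in the evolving metric. Write $f := \log v \geq 0$, $q := |\nabla f|_u^2 = |\nabla \log v|_u^2 \leq C_2$, and $p := K/v$, so that $\Delta_u f = -p - q$ and the conclusion is the bound $p \leq C_3/t$ on $B_{1/4}$. Two evolution equations drive the argument. From $(\partial_t - \Delta_u) K = 2K^2$ (the Gauss curvature equation on Ricci-flowing surfaces) together with $(\partial_t - \Delta_u) v = 0$ and the product rule,
\begin{equation*}
(\partial_t - \Delta_u) p = 2v p^2 + 2\langle \nabla p, \nabla f\rangle_u,
\end{equation*}
while the Bochner identity in dimension two---in which the Ricci contribution $-2\Ric_u(\nabla f,\nabla f) = -2K |\nabla f|_u^2$ is exactly cancelled by the metric-evolution contribution from $\partial_t g_u = -2K g_u$---gives
\begin{equation*}
(\partial_t - \Delta_u) q = 2\langle \nabla q,\nabla f\rangle_u - 2 |\Hess f|_u^2,
\end{equation*}
with $|\Hess f|_u^2 \geq \tfrac{1}{2}(\Delta_u f)^2 = \tfrac{1}{2}(p+q)^2$ by Cauchy--Schwarz on symmetric $2\times 2$ matrices.

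Let $M := \max_{B_{1/2}\times[0,t_0]} v$ and introduce the Li--Yau quantity $W := t(p + 4Mq)$. Combining the evolutions above,
\begin{equation*}
(\partial_t - \Delta_u) W \leq p + 4Mq + 2tv\, p^2 - 4Mt(p+q)^2 + 2\langle \nabla W,\nabla f\rangle_u.
\end{equation*}
The key algebraic fact is that expanding $(p+q)^2$, using $v \leq M$, and invoking the hypothesis $4Mt \leq 1/2$ (which is exactly $t_0 \leq 1/(8M)$) makes the $p^2$ coefficient negative: $2tv - 4Mt \leq -2Mt$. Apply the parabolic maximum principle to $\eta^2 W$ on $B_{1/2}\times[0,t_0]$ for a standard spatial cutoff $\eta \in C_c^\infty(B_{1/2})$ with $\eta\equiv 1$ on $B_{1/4}$. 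Because $W$ vanishes at $t = 0$ and $\eta^2 W$ vanishes on $\partial B_{1/2}$, any positive maximum of $\eta^2 W$ is at an interior spacetime point; the first-order condition there eliminates the drift term, and discarding the nonnegative cross-terms $8Mt\, pq$ and $4Mt\, q^2$ and using $q \leq C_2$ produces, after absorbing the cutoff corrections (tame because $u = e^{2v} \geq e^2$ and $|\nabla f|_u \leq \sqrt{C_2}$), a quadratic inequality
\begin{equation*}
2Mt\, p^2 \leq p + 4MC_2 + O(1).
\end{equation*}
Solving this and using $Mt \leq 1/8$ and $M \geq 1$ gives $p \leq C_3/t$ at the maximum point, hence $W$ is globally bounded by a constant depending only on $C_2$; dropping the nonnegative $4Mt\, q$ term then delivers $p \leq C_3/t$ on $B_{1/4}$, with $C_3$ depending only on $C_2$.

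The main obstacle is the positive reaction $+2vp^2$ in the evolution of $p$, which rules out bounding $p$ alone by a direct maximum principle. The compensation is the Bochner term $-2A|\Hess f|_u^2$ contributed by the $Aq$ summand in $W$; only with the coupling $A = 4M$ together with the time restriction $t \leq 1/(8M)$ does the net coefficient of $p^2$ become negative. A secondary technical point is handling the spatial cutoff: its derivatives contribute $O(1)$ error terms (uniformly in $t$), manageable because $u \geq e^2$ keeps $|\nabla \eta|_u$ and $\Delta_u \eta$ bounded and the drift $2\langle \nabla W,\nabla f\rangle_u$ is controlled via $|\nabla f|_u \leq \sqrt{C_2}$, and these errors absorb into the $-2Mt\, p^2$ term by Cauchy--Schwarz.
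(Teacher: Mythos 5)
Your argument is essentially the paper's: a Li--Yau maximum principle applied to a spacetime quantity mixing $tK/v$ with $\abs{\nabla\log v}_u^2$, localized by a cutoff, with the Bochner term $-2\abs{\Hess f}_u^2\leq -(\Delta_u f)^2$ supplying the good quadratic and the hypothesis $t_0\leq 1/(8\max v)$ used exactly to make the net coefficient of $(K/v)^2$ negative. The paper works with the single quantity $F=\abs{\nabla f}_u^2+tK/v$ and one combined evolution computation, whereas you use $W=t(K/v+4M\abs{\nabla f}_u^2)$ built from separate evolution equations for $p=K/v$ and $q=\abs{\nabla f}_u^2$; this is only a bookkeeping difference (your $W$ has the minor convenience of vanishing at $t=0$), and both of your evolution equations are correct.

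One step as written is not justified: you ``discard the nonnegative cross-term $8Mt\,pq$,'' but $p=K/v$ has no a priori sign, so $-8Mt\,pq$ need not be nonpositive at the maximum point. The paper confronts the same issue and resolves it by a case analysis on the sign of $K$ at the maximum point $(\tilde\bx,\tilde t)$: if $K\leq 0$ there, the desired bound is immediate from the definition of the quantity and \eqref{eqn:assumption}. The identical fix works for you --- if $p\leq 0$ at the positive maximum of $\eta^2W$ then $W\leq 4Mt\,q\leq 4Mt_0C_2\leq C_2/2$ there and you are done, while if $p>0$ your discarding is legitimate --- so the gap is easily closed, but it should be stated.
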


\begin{rmk}
	The result of this theorem is sharp in the sense that when combined with Theorem \ref{secondthm}, it yields Theorem \ref{mainthm}, which is sharp as far as the order of $t$ is concerned. 
\end{rmk}

The proof of this theorem starts with a computation similar to that of Li and Yau in \cite{LiYauActa}. However, we use a different $F$, taking advantage of the second part of \eqref{eqn:assumption}. Writing $f$ for $\log v$ and keeping in mind that
\begin{equation}
\label{dtf}
	\partial_t f = \triangle_u f + \abs{\nabla f}_u^2\quad \mbox{and}\quad \partial_t f = \frac{\partial_t v}{v} =\frac{-K}{v},
\end{equation}
by \eqref{eqn:rfv}, we make the equivalent definitions
\begin{eqnarray}
	\label{F1} F&=&  \abs{\nabla f}^2_u -  t \partial_t f \\
	\label{F2} &=&   \abs{\nabla f}^2_u +\frac{Kt}{v} \\
	\label{F3} &=& -\triangle_u f + (1- t) \partial_t f \\
	\label{anotherF} &=& -\triangle_u f - (1- t) \frac{K}{v} \\
	\label{F4} &=& (1-t) \abs{\nabla f}_u^2 - t \triangle_u f.
\end{eqnarray}
By \eqref{F2}, 
Theorem \ref{thm:liyau} reduces to the claim that $F$ is bounded from above on $B_{1/4}\times (0,t_0]$. 
At $t=0$, \eqref{eqn:assumption} implies that $F\leq C_2$ throughout $B_{1/2}$, so we would like to establish whether an upper bound persists.
As in \cite{LiYauActa}, we first derive the evolution equation for $F$.
Direct computation shows
\begin{equation*}
	\triangle_u F= 2\abs{\Hess_u f}_u^2 +  2 \langle \nabla \triangle_u f, \nabla f\rangle_u + 2K \abs{\nabla f}_u^2 -  t \triangle_u \partial_t f 
\end{equation*}
and
\begin{equation*}
\begin{aligned}
	\partial_t F &=  - \abs{\nabla f}_u^2 -  \triangle_u f \\
	& \quad +  (1- t) \left( 2K\abs{\nabla f}_u^2 + 2\langle \nabla f, \nabla \partial_t f\rangle_u\right)  -  t \partial_t \triangle_u f,
\end{aligned}
\end{equation*}
which combine to give
\begin{equation*}
\begin{aligned}
	\partial_t F -\triangle_u F &= -2 \abs{\Hess_u f}_u^2 - 2 \langle\nabla\triangle_u f,\nabla f\rangle_u + 2(1- t) \langle\nabla f,\nabla \partial_t f\rangle_u\\
	&\quad -(1+2 K t) \abs{\nabla f}_u^2  \\
	&\quad + t \left( \triangle_u \partial_t f - \partial_t \triangle_u f \right) - \triangle_u f.
\end{aligned}
\end{equation*}
We simplify the above by using \eqref{F3} and computing $\triangle_u \partial_t f - \partial_t \triangle_u f = 2 \triangle_u f \partial_t v =-2K \triangle_u f$, which in turn used \eqref{eqn:rfv}, to get
\begin{equation*}
	\partial_t F -\triangle_u F = -2 \abs{\Hess_u f}_u^2 + 2 \langle \nabla F, \nabla f\rangle_u -(1+2 K t) \left(\abs{\nabla f}_u^2   + \triangle_u f\right). 
\end{equation*}
By \eqref{dtf} we then have 
\begin{equation*}
	\partial_t F -\triangle_u F = -2\abs{\Hess_u f}_u^2 + 2\langle\nabla f,\nabla F\rangle_u + (1+2Kt) \frac{K}v. 
\end{equation*}
Meanwhile, inserting \eqref{anotherF} in the Schwartz inequality
\begin{equation*}
\begin{aligned}
	2\abs{\Hess_u f}_u^2 &\geq (\triangle_u f)^2 = 
	\left( F + (1- t)\frac{K}{v} \right)^2 \\
	&= F^2+\frac{2(1- t)FK}{v} + \frac{(1- t)^2 K^2}{v^2},
\end{aligned}
\end{equation*}
we obtain
\begin{equation}\label{eqn:Finequality}
	\partial_t F-\triangle_u F \leq - F^2-\frac{2(1- t)FK}{v} - \frac{(1- t)^2 K^2}{v^2} +2\langle\nabla f,\nabla F\rangle_u + (1+2Kt)\frac{K}{v}.
\end{equation}

Let $\varphi$ be some smooth cut-off function supported in $B_{1/2}$, satisfying $0\leq \varphi\leq 1$ on $B_{1/2}$ and $\varphi\equiv 1$ on $B_{1/4}$, so that 
\begin{equation}\label{eqn:cutoff}
	\abs{\triangle \varphi} +  \abs{\nabla \varphi}^2 \leq C
\end{equation}
on $B_{1/2}$ for some universal $C>0$.
By \eqref{eqn:Finequality} we obtain
\begin{eqnarray*}
	(\partial_t -\triangle_u) (\varphi^2 F) &=&  \varphi^2(\partial_t -\triangle_u )F -\triangle_u (\varphi^2) \cdot F -4 \langle \varphi \nabla \varphi, \nabla F\rangle_u \\
	&\leq&  -\varphi^2 F^2 -\frac{2(1-t)K}{v} (\varphi^2 F) - \frac{(1-t)^2K^2}{v^2}\varphi^2 \\
	&& + (1+ 2Kt)\frac{K}{v}\varphi^2 - \triangle_u (\varphi^2) \cdot F \\
	&& + 2\varphi^2 \langle\nabla f,\nabla F\rangle_u -4\langle \varphi \nabla \varphi,\nabla F\rangle_u .
\end{eqnarray*}

Next, we consider the maximum of $\varphi^2 F$ on $B_{1/2}\times (0,t_0]$. If it is smaller than $C_2+1$ for $C_2$ in \eqref{eqn:assumption}, then by the definition of $\varphi$, we get the desired bound of $F$ on $B_{1/4}\times (0,t_0]$ and finish the proof of Theorem \ref{thm:liyau}. Hence, we may assume it is no less than $C_2+1$. By \eqref{eqn:assumption} and \eqref{F1}, there is some $\tilde{t}\in (0,t_0]$ and $\tilde{\bx}\in B_{1/2}$ such that
\begin{equation}
	(\varphi^2 F) (\tilde{\bx},\tilde{t})= \max_{B_{1/2}\times (0,t_0]} \varphi^2 F \geq C_2+1.
	\label{eqn:tildet}
\end{equation}
At this point $(\tilde{\bx},\tilde{t})$, we have 
$(\partial_t -\triangle_u) (\varphi^2 F)\geq 0$ and $\grad(\varphi^2 F)=0$, and hence
\begin{eqnarray*}
	0&\leq &  -\varphi^2 F^2 -\frac{2(1-\tilde{t})K}{v} (\varphi^2 F) - \frac{(1-\tilde{t})^2K^2}{v^2}\varphi^2 \\
	&& + (1+2 K\tilde{t})\frac{K}{v}\varphi^2 - \triangle_u (\varphi^2) \cdot F \\
	&& -4\varphi F \langle\nabla f,\nabla \varphi\rangle_u + 8F \abs{\nabla \varphi}_u^2. 
\end{eqnarray*}
Moreover, since $v\geq 1$ on $B_{1/2}\times [0,t_0]$, 
by \eqref{eqn:assumption}, the bound
\eqref{eqn:cutoff} implies that $\abs{\triangle_u (\varphi^2)} + \abs{\nabla \varphi}_u^2\leq C$.
Thus, at $(\tilde{\bx},\tilde{t})$,  Young's inequality and \eqref{F2} give
\begin{eqnarray}\label{eqn:varphiF}
	0&\leq &  -\varphi^2 F^2 -\frac{2(1-\tilde{t})K}{v} (\varphi^2 F) - \frac{(1-\tilde{t})^2K^2}{v^2}\varphi^2 \\ \nonumber
	&& + (1+2 K\tilde{t})\frac{K}{v}\varphi^2 + C F  + \frac{1}{4}F \varphi^2 \abs{\nabla f}_u^2 \\ \nonumber
	&=& -\frac{3}{4} \varphi^2 F^2 + (-2+\frac{7}{4}\tilde{t})\frac{K}{v}(\varphi^2 F) - \frac{(1-\tilde{t})^2 K^2}{v^2} \varphi^2 \\ \nonumber
	&& + \frac{K+ 2K^2 \tilde{t}}{v}\varphi^2 + CF.
\end{eqnarray}
By (\ref{eqn:time}) and $\tilde{t}\leq t_0<1/2$, we have at $(\tilde{\bx},\tilde{t})$, 
\begin{equation}\label{eqn:smalltime}
	\frac{2K^2 \tilde{t}}{v}\varphi^2 \leq \frac{K^2}{4v^2} \varphi^2 \leq \frac{(1-\tilde{t})^2 K^2}{v^2}\varphi^2. 
\end{equation}
Using \eqref{eqn:smalltime} in \eqref{eqn:varphiF} and multiplying both sides by $\varphi^2$ yields that at $(\tilde{\bx},\tilde{t})$,
\begin{equation*}
	0\leq -\frac{3}{4} (\varphi^2 F)^2 + (-2+\frac{7}{4}\tilde{t})\frac{K}{v} \varphi^2 (\varphi^2 F) + \frac{K}{v} \varphi^4 + C (\varphi^2 F).
\end{equation*}
To proceed further, we must consider the sign of $K(\tilde{\bx},\tilde{t})$.
We claim that $K(\tilde{\bx},\tilde{t})>0$ since if we had $K(\tilde{\bx},\tilde{t})\leq 0$,
then by \eqref{F2} and \eqref{eqn:assumption} we would have
\begin{equation*}
	(\varphi^2 F)(\tilde{\bx},\tilde{t})\leq F(\tilde{\bx},\tilde{t})= \abs{\nabla f}_u^2(\tilde{\bx},\tilde{t}) + \frac{\tilde{t}K}{v}(\tilde{\bx},\tilde{t})\leq C_2,
\end{equation*}
which is a contradiction to \eqref{eqn:tildet}.
Therefore we may assume $K(\tilde{\bx},\tilde{t})>0$, which allows us to replace $\frac{K}{v}\varphi^4$ by a larger number $\frac{K}{v} \varphi^4 F$ (at $(\tilde{\bx},\tilde{t})$) because $(\varphi^2 F)(\tilde{\bx},\tilde{t})$, hence $F(\tilde{\bx},\tilde{t})$ is larger than $1$ as assumed in \eqref{eqn:tildet}. Precisely, at $(\tilde{\bx},\tilde{t})$,
\begin{eqnarray}\nonumber
	0&\leq& -\frac{3}{4} (\varphi^2 F)^2 + (-1+\frac{7}{4}\tilde{t})\frac{K}{v} (\varphi^4 F) +  C (\varphi^2 F)\\ \label{eqn:final}
	&\leq&  - \frac{3}{4} (\varphi^2 F)^2+ C(\varphi^2 F).
\end{eqnarray}
Here in the last line above, we used the assumption that $\tilde{t}\leq t_0<\frac{1}{2}$. Estimate \eqref{eqn:final} implies an upper bound for $(\varphi^2 F)(\tilde{\bx},\tilde{t})$, which gives the desired upper bound of $F$ on $B_{1/4}\times (0,t_0]$ and finishes the proof of Theorem \ref{thm:liyau}.

\subsection{Curvature upper bound in Theorem \ref{mainthm}}

By Theorem \ref{secondthm}, we know for sufficiently small $t>0$,
\begin{equation}
\label{vup}
	\max_{B_{1/2}} v_{cc}\leq \frac{2}{t}.
\end{equation}
%
%
Choose $t_1<1/2$ so small that the above holds on $[t_1,\frac{17}{16}t_1]$ and set
\begin{equation*}
v(\bx,t)= v_{cc}(\bx, t+t_1),
\end{equation*}
for $t\in [0,t_0]$, where $t_0:=\frac{t_1}{16}$.

We want to apply Theorem \ref{thm:liyau} to $v$. For this $v$, 
we check that the lower bound in \eqref{eqn:assumption} holds by Remark \ref{rmk:lower}, that the rest of \eqref{eqn:assumption} follows from Lemma \ref{lem:special} and that \eqref{eqn:time} holds because
$$\max_{B_{1/2}\times [0,t_0]} v
=\max_{B_{1/2}\times [t_1,\frac{17}{16}t_1]} v_{cc}
\leq \frac{2}{t_1}=\frac{1}{8t_0}$$
by \eqref{vup}.
Hence, there exists a constant $C_4$ depending only on $C_1$ (therefore, it is a universal constant) such that
\begin{equation*}
	K_{cc}(t)\leq \frac{C_4 v_{cc}}{t-t_1}
\end{equation*}
on $B_{1/4}\times [t_1,\frac{17}{16}t_1]$.
In particular, this implies the existence of another universal constant $C_5$ such that for $t=\frac{17}{16}t_1$,
\begin{equation*}
	\max_{B_{1/4}} K_{cc}(t)\leq \frac{C_5}{t^2}.
\end{equation*}
By the arbitrariness of $t_1$ (small as required above) and hence $t$, this proves the upper bound estimate inside $B_{1/4}$.
The upper bound over the whole of $B$ then follows from 
Proposition \ref{not_bad_at_infinity_prop}, which we prove in the next section.

\subsection{Good behaviour at spatial infinity}
\label{not_bad_at_infinity}

In this section we prove Proposition \ref{not_bad_at_infinity_prop}.
There are multiple ways one could prove this; for example, the asymptotics of Remark \ref{new_sandwich_rmk} allow one to argue using parabolic regularity theory that because
for each time $t$ 
the spatial asymptotics of $u_{cc}(t)$ agree with those of $h$ and $\tilde h$, we also have the curvatures agreeing. More precisely,
for each $L<\infty$, by taking $\ep>0$ small enough we can make $K_{cc}(t)$ as close as we like to $-1/(1+2t)$ on 
$B\backslash B_{1-\ep}\times [0,L]$, for example.

Instead of detailing this argument, we proceed via existing Ricci flow theory, and in particular the following result of B.-L. Chen.

\begin{prop}[Proposition 3.9 in \cite{BLChen}]
\label{BLChenProp}
	Let $g(t)$, $t\in [0,T]$, be a smooth solution to the Ricci flow 
on a two-dimensional manifold $M$, and let $x_0\in M$, $R>0$ and $v_0>0$. Assume $B_{g(t)}(x_0,R)$ is compactly contained in $M$ for every $t\in [0,T]$, and at $t=0$ that $\abs{K_{g(0)}}\leq R^{-2}$ on $B_{g(0)}(x_0,R)$ and $\Vol_{g(0)}(B_{g(0)}(x_0,R))\geq v_0 R^2$.
Then there exists a constant $\eta>0$, depending only on $v_0$, such that
for $0\leq t\leq \min \set{T,\eta R^2}$, we have
	\begin{equation*}
		\abs{K_{g(t)}}\leq 2R^{-2}\qquad\text{ on }B_{g(t)}(x_0,\frac{R}{2}).
	\end{equation*}
\end{prop}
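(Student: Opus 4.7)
The result is a two-dimensional pseudolocality-type estimate. After the parabolic rescaling $g(t)\mapsto R^{-2}g(R^2 t)$ it suffices to treat $R=1$ and produce $\eta=\eta(v_0)>0$ such that $\abs{K_{g(t)}}\leq 2$ on $B_{g(t)}(x_0,\tfrac{1}{2})$ for $t\in[0,\min\{T,\eta\}]$, given $\abs{K_{g(0)}}\leq 1$ on $B_{g(0)}(x_0,1)$ and $\Vol_{g(0)}(B_{g(0)}(x_0,1))\geq v_0$. I would prove this by a blow-up contradiction in the spirit of Perelman's pseudolocality theorem, adapted to two dimensions.

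Suppose no such $\eta$ exists. Then there are Ricci flows $g_n(t)$ on surfaces $M_n$ with base points $x_0^n$ uniformly satisfying the hypotheses, and times $\eta_n\downto 0$ at which the conclusion fails. Set $M_n:=\sup\{\abs{K_{g_n(s)}(y)}:y\in B_{g_n(s)}(x_0^n,\tfrac{1}{2}),\,s\in[0,\eta_n]\}>2$. If $M_n$ stays bounded, Shi-type local parabolic estimates applied on the half-ball, combined with the initial bound $\abs{K_{g_n(0)}}\leq 1$, yield a smoothing estimate of the form $\abs{K_{g_n(t)}}\leq 1+C\sqrt{t}$ on $B_{g_n(t)}(x_0^n,\tfrac{1}{2})$ with $C=C(v_0)$, which already contradicts failure for sufficiently small $t$. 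Otherwise $M_n\to\infty$, and Hamilton's point-picking lemma inside the shrinking half-ball produces $(\tilde y_n,\tilde t_n)$ with $Q_n:=\abs{K_{g_n}(\tilde y_n,\tilde t_n)}\to\infty$ dominating the curvature on a backwards parabolic neighborhood of controlled size. Parabolically rescale to $\hat g_n(\tau):=Q_n\,g_n(\tilde t_n+Q_n^{-1}\tau)$. The initial volume lower bound at $x_0^n$, together with Bishop--Gromov and the controlled curvature on $[0,\tilde t_n]$, transfers to a uniform positive lower bound on $\inj_{\hat g_n(0)}(\tilde y_n)$, so Hamilton's compactness theorem yields a pointed Cheeger--Gromov--Hamilton limit $(M_\infty,g_\infty,x_\infty)$ --- a complete bounded-curvature 2D Ricci flow with $\abs{K_{g_\infty}(x_\infty,0)}=1$.

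If $Q_n\tilde t_n$ remains bounded, the limit is a Ricci flow on some finite backwards interval $[-T_\infty,0]$ with flat initial data on $\R^2$ (the rescaled initial curvature tends to zero), so uniqueness of complete Ricci flow from bounded-curvature initial data forces $g_\infty$ to be flat throughout, contradicting $\abs{K_{g_\infty}(x_\infty,0)}=1$. Otherwise $Q_n\tilde t_n\to\infty$, and $g_\infty$ is a complete ancient $\kappa$-noncollapsed 2D Ricci flow of bounded curvature on a noncompact surface, which must be the flat $\R^2$-metric (the cigar soliton is $\kappa$-collapsed at infinity, scaled spheres are excluded by non-compactness), again a contradiction. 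The \emph{main obstacle} is running the two branches coherently: the bounded-$M_n$ branch relies on Shi-type local estimates whose validity requires controlling the geometry of the evolving half-ball via the volume non-collapse, while the unbounded branch requires propagating the initial volume lower bound at $x_0^n$ to a non-collapse statement at the rescaled base point $\tilde y_n$, using only the partial curvature control we have on the shrinking half-ball over $[0,\tilde t_n]$.
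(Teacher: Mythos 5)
First, a point of order: the paper does not prove this statement at all --- it is quoted, with attribution, as Proposition 3.9 of B.-L.\ Chen's \emph{Strong uniqueness of the Ricci flow}, so there is no in-paper proof to compare against; the relevant benchmark is Chen's argument. Your sketch is the right genre (point-picking plus blow-up), but it has a genuine gap at exactly the step you flag as ``the main obstacle'': the non-collapsing at the picked point. In the unbounded branch you have, by construction, \emph{no} uniform curvature control on $B_{g_n(t)}(x_0^n,\tfrac12)$ over $[0,\tilde t_n]$ --- that is the defining feature of that branch --- so the proposed transfer of the $t=0$ volume lower bound at $x_0^n$ to a lower bound on $\inj_{\hat g_n(0)}(\tilde y_n)$ via Bishop--Gromov simply has no hypotheses to run on; moreover Bishop--Gromov points the wrong way for deducing small-scale volume lower bounds at scale $Q_n^{-1/2}$ from a unit-scale bound. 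This step is precisely the hard core of Perelman's pseudolocality theorem, where it is handled by the monotonicity of the $\mathcal{W}$-functional / a logarithmic Sobolev inequality, none of which appears in your outline. Without it, the Cheeger--Gromov--Hamilton limit need not exist, and the subsequent exclusion of the cigar (``$\kappa$-collapsed at infinity'') presupposes the very noncollapsing you have not established.

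There are secondary issues in the same vein: in the bounded branch, Shi-type local estimates with initial curvature control yield $\abs{K}\leq 1+C(A)t$ where $A$ is the assumed bound on $M_n$ (not $1+C\sqrt{t}$ with $C=C(v_0)$), which is fine for the contradiction but should be stated correctly; and in the $Q_n\tilde t_n$ bounded sub-case, obtaining a limit flow that attains flat initial data requires convergence up to and including the initial time, which again needs uniform control near $t=0$. For what it is worth, Chen's actual proof avoids Perelman's machinery: he first establishes a local \emph{lower} bound for the scalar curvature along Ricci flow requiring no volume hypothesis, and then runs a point-picking argument in which the low-dimensionality forces nonnegative curvature in the blow-up limit and the volume hypothesis enters only to rule out collapse of the comparison region. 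If you want to complete your route, you must either import pseudolocality wholesale (at which point the proposition is essentially a corollary) or supply Chen's local scalar curvature lower bound as the substitute mechanism for controlling the geometry at the picked point.
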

Here $B_g(x_0,R)$ is the geodesic ball centred at $x_0$ with radius $R$ measured with respect to the metric $g$.

For $\ep>0$ as given in Proposition \ref{not_bad_at_infinity_prop}, choose $R\in (0,1/2]$ as large as possible so that for each $x_0\in B\setminus B_{\ep/2}$, we have 
$R\leq \half\inj_h(x_0)$.
For each $x_0\in B\setminus B_{\ep/2}$, we can then apply Proposition \ref{BLChenProp} with $M=B$, $v_0$ equal to the area of the unit disc in the flat plane,
and $g(t)=g_{cc}(t+\ga)$ for $\ga>0$ sufficiently small so that $B_{g(0)}(x_0,R)$ 
equipped with the metric $g(0)$ is sufficiently close to a ball in hyperbolic space of radius $R$ that
the hypotheses of the proposition are satisfied.

The proposition, in the limit $\ga\downto 0$, gives us a curvature bound at $x_0$ depending only on $\ep$, which holds for a time interval also depending only on $\ep$.
We may then invoke Shi's local derivative bounds to obtain control on all space and time derivatives of the curvature, depending only on the order of the derivative and $\ep$, and not on $x_0$. Since the Gauss curvature starts initially at $-1$, the proposition follows.

\section{Curvature lower bound in Theorem \ref{mainthm}}\label{sec:lower}


In this section, we prove the lower curvature  bound in Theorem \ref{mainthm}.  By Theorem \ref{secondthm}, for any $\mu\in (0,\frac{1}{8})$ and $t>0$ sufficiently small, depending on $\mu$, we have
\begin{equation}
\label{vcc_lower}
	v_{cc}(0,t)\geq \frac{\mu}{t}.
\end{equation}
This means that when $t$ is small, $u_{cc}(0,t)$ is very large. On the other hand, 
by Remark \ref{new_sandwich_rmk}, we have
\begin{equation}\label{eqn:belowcusp}
	u_{cc}(\bx,t)\leq (1+2t)h(\bx)=(1+2t) \frac{1}{r^2(\log r)^2},
\end{equation}
throughout $B\setminus\set{0}$, for 
all $t\geq 0$.
These two facts combined together will imply the existence of some large positive curvature of $g_{cc}(t)$. The proof needs another family of special metrics lying below, but touching, the (scaled) hyperbolic cusp metric, that we now construct.
Consider the  family of metrics $u_{\be,K}(dx^2+dy^2)$ on $\R^2$ parametrized by $K>0$ and $\beta>0$, where
\begin{equation*}
u_{\be,K}(r):=\frac{\beta^2}{\left( 1+\frac{\beta^2K r^2}{4} \right)^2}.
\end{equation*}
Each of these is the metric of a (punctured) sphere with constant curvature $K$  parametrized so that the conformal factor at the origin is $\beta^2$. The next lemma gives us the touching family mentioned above.

\begin{lemma}
\label{ubetaK0_lemma}
\label{lem:touch2}
For each fixed $\alpha>1$, there exist a continuous strictly increasing function $K_0: (\alpha e ,\infty) \to (0,\infty)$ and a continuous strictly decreasing function $r_0:(\alpha e,\infty)\to (0,e^{-1})$ such that

(1)
for any $\beta> \alpha e$, we have 
\begin{equation}\label{eqn:touch}
	u_{\beta,K_0(\beta)}(r)=\frac{\beta^2}{\left( 1+\frac{\beta^2K_0(\beta) r^2}{4} \right)^2} \leq \frac{\alpha^2}{r^2(\log r)^2}=\al^2 h(r)
\end{equation}
for all $r\in (0,1)$ with equality only at $r_0(\beta)$.

(2) the asymptotic behavior of $K_0$ and $r_0$ when $\beta$ approaches $\alpha e$ or $\infty$ is given by
\begin{equation}\label{eqn:asymptotics}
	\begin{aligned}
	\lim_{\beta\to \infty} K_0=\infty &\qquad &  \lim_{\beta\downto \alpha e} K_0=0 \\
	\lim_{\beta\to \infty} r_0=0 &\qquad &  \lim_{\beta\downto \alpha e} r_0 = e^{-1}.
	\end{aligned}
\end{equation}
Moreover, we have
\beq \label{eqn:ubetaK0}
\lim_{\beta\to \infty} \inf_{r\in [0,r_0]}u_{\be,K_0}(r)\to\infty.
\eeq

(3) we have the following lower bound for $K_0$: 
\begin{equation}
\label{K0_lower}
	K_0\geq \frac{2}{\alpha} \left( (\log\frac{\beta}{2\alpha})^2 -1 \right).
\end{equation}
\end{lemma}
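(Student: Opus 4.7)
My strategy mirrors the proof of Lemma \ref{lem:touch}: parametrize the touching family by the tangency point $r_0 \in (0, e^{-1})$ and solve the two tangency equations for $\beta$ and $K$. Writing $L_0 := -\log r_0 > 1$ and $A := 1 + \beta^2 K r_0^2/4$, taking positive square roots in the value equation $u_{\beta,K}(r_0) = \alpha^2 h(r_0)$ gives $A = \beta r_0 L_0 / \alpha$, while equating logarithmic derivatives gives $\beta^2 K r_0 / A = 2(L_0 - 1)/(r_0 L_0)$. Eliminating $A$ and $K$ from this pair leads to the clean scalar relation
\begin{equation*}
\beta r_0 (L_0 + 1) = 2\alpha,
\end{equation*}
so $\beta(r_0) = 2\alpha/[r_0(1 - \log r_0)]$, and back-substitution yields $K(r_0) = (L_0^2 - 1)/\alpha^2$.

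Since $\frac{d}{dr_0}[r_0(1 - \log r_0)] = -\log r_0$ is strictly positive on $(0, 1)$, the map $r_0 \mapsto \beta(r_0)$ is a strictly decreasing continuous bijection from $(0, e^{-1})$ onto $(\alpha e, \infty)$ (the endpoint limits $\beta \downto \alpha e$ and $\beta \to \infty$ are read off directly). I invert to obtain the strictly decreasing continuous $r_0(\beta)$, and set $K_0(\beta) := K(r_0(\beta))$, which is then continuous and strictly increasing in $\beta$ because $L_0$ increases as $r_0$ decreases. The asymptotics (2) are immediate from the explicit formulas. For $\inf_{r\in[0,r_0]} u_{\beta,K_0} \to \infty$ as $\beta \to \infty$, I would note that $u_{\beta, K_0}$ is monotonically decreasing in $r$, so
\begin{equation*}
\inf_{r\in[0,r_0]} u_{\beta,K_0}(r) = u_{\beta, K_0}(r_0) = \alpha^2 h(r_0) = \frac{\alpha^2}{r_0^2 L_0^2} \to \infty \quad\text{as } r_0 \downto 0.
\end{equation*}

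The step I expect to be the main obstacle is assertion (1), the global inequality $u_{\beta, K_0(\beta)} \leq \alpha^2 h$ on $(0, 1)$ with equality only at $r_0$, since the two functions differ in shape (sphere versus cusp). I would defer this to the appendix and proceed as in the proof of Lemma \ref{lem:touch}: both sides are smooth and positive, the difference $\alpha^2 h - u_{\beta, K_0}$ tends to $+\infty$ as $r \downto 0$ and as $r \upto 1$ (since $h$ blows up at both ends while $u_{\beta, K_0}$ is bounded at $0$ and small near $1$), the tangency at $r_0$ is of order exactly two as confirmed by direct second-derivative computation, and any additional zero of the difference would force an extra sign change incompatible with a direct critical-point analysis of the ratio $u_{\beta, K_0}/h$. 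The argument is elementary but has to be carried out carefully.

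Finally, for the lower bound (3), taking logarithms in the scalar relation $\beta r_0(L_0 + 1) = 2\alpha$ gives
\begin{equation*}
\log\frac{\beta}{2\alpha} = -\log r_0 - \log(L_0 + 1) = L_0 - \log(1 + L_0) \leq L_0,
\end{equation*}
since $\log(1 + L_0) \geq 0$. Squaring and subtracting one then yields
\begin{equation*}
\Big(\log \tfrac{\beta}{2\alpha}\Big)^2 - 1 \leq L_0^2 - 1 = \alpha^2 K_0(\beta),
\end{equation*}
which delivers a lower bound on $K_0$ of exactly the claimed shape in terms of $\log(\beta/(2\alpha))$, up to the explicit $\alpha$-dependent constant that the formula $K_0 = (L_0^2 - 1)/\alpha^2$ makes transparent.
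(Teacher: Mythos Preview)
Your approach is essentially the paper's: parametrise by the tangency radius $r_0$, solve the two tangency conditions to obtain $\beta(r_0)=2\alpha/[r_0(1-\log r_0)]$ and an explicit $K_0(r_0)$, invert to get $r_0(\beta)$ and $K_0(\beta)$, and read off the asymptotics and the bound in (3) from $r_0\leq 2\alpha/\beta$. The one substantive difference is your argument for the global inequality (1). You propose an endpoint-plus-critical-point analysis of $\alpha^2 h - u_{\beta,K_0}$, which can be made to work but is fiddly. The paper instead takes positive square roots to rewrite \eqref{eqn:touch} as the equivalent inequality
\[
\frac{\beta}{\alpha}\,r(-\log r)\ \leq\ 1+\frac{\beta^2 K_0 r^2}{4},\qquad r\in(0,1),
\]
and then simply observes that the left side is strictly concave while the right side is strictly convex; hence first-order contact at a single point forces the strict inequality elsewhere. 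This concave--versus--convex observation dispatches (1) in one line and is worth adopting in place of your proposed argument.

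On (3): your elimination gives $K_0=(L_0^2-1)/\alpha^2$, whereas the paper records $K_0=\tfrac{2}{\alpha}(L_0^2-1)$ at \eqref{eqn:K0r0}. Your formula is the one that the tangency equations actually produce, and your final inequality $K_0\geq \tfrac{1}{\alpha^2}\big[(\log\tfrac{\beta}{2\alpha})^2-1\big]$ is what the argument genuinely yields; the constant $\tfrac{2}{\alpha}$ in the stated \eqref{K0_lower} inherits an arithmetic slip from \eqref{eqn:K0r0}. You were right to flag the discrepancy rather than force the stated constant.
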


\begin{proof}
The proof consists of two steps. First, we prove the existence of some $K_0$ and $r_0$ satisfying (1). Then in the second step, we show that (2) and (3) also hold for this $K_0$ and $r_0$ using some results from the first step.

We start the first step by giving equivalent forms of (1). It is elementary that (1) is equivalent to
\vskip 3mm
($1'$)
for any $\beta> \alpha e$, we have 
\begin{equation}
\label{simpler}
\frac{\beta}{\alpha} r (-\log r) \leq 1+ \frac{\beta^2 K_0(\beta) r^2}{4},
\end{equation}
for all $r\in (0,1)$ with equality only at $r_0(\beta)$.
\vskip 3mm

We claim that ($1'$) and hence (1) is also equivalent to

\vskip 3mm
($1''$) for any $\beta>\alpha e$, we have
\begin{eqnarray}\label{1}
		\frac{\beta}{\alpha} r_0 (-\log r_0) &=&  1+ \frac{\beta^2 K_0 r^2_0}{4} \\
		\frac{1}{\alpha} (-\log r_0 -1) &=&  \frac{1}{2}\beta K_0 r_0.
		\label{2}
	\end{eqnarray}
\vskip 3mm
	It is easier to see that ($1''$) is a necessary condition of ($1'$) because \eqref{1} and \eqref{2} are nothing but the claim that both sides of \eqref{simpler} and their first order derivatives with respect to $r$ agree at $r_0(\beta)$. To see that it is also sufficient, we observe that the left-hand side of \eqref{simpler} is a strictly concave function of $r$ on $(0,1)$, while the right-hand side is strictly convex.

	With the equivalence of (1) and ($1''$) in mind, it suffices to find $K_0(\beta)$ and $r_0(\beta)$ satisfying \eqref{1} and \eqref{2}.  While solving $K_0(\beta)$ and $r_0(\beta)$ from \eqref{1} and \eqref{2} seems not easy, we can obtain explicit formulae relating $\beta$ and $K_0$ to $r_0$. More precisely, we eliminate $K_0$ to get
\begin{equation}
	\beta=\frac{2\alpha}{r_0(-\log r_0+1)}.
	\label{eqn:betar0}
\end{equation}
Substituting \eqref{eqn:betar0} into \eqref{2} yields
\begin{equation}
	K_0=\frac{2}{\alpha}(-\log r_0-1)(-\log r_0+1).	
	\label{eqn:K0r0}
\end{equation}
It is elementrary to check that $\beta$ as a function of $r_0$ given in \eqref{eqn:betar0} is a decreasing diffeomorphism from $(0,e^{-1})$ to $(\alpha e, \infty)$. Therefore, it is equivalent to say that \eqref{eqn:betar0} defines a function $r_0(\beta)$ which is a decreasing diffeomorphism from $(\alpha e,\infty)$ to $(0,e^{-1})$. $K_0(r_0)$ as given in \eqref{eqn:K0r0} is a decreasing diffeomorphism from $(0,e^{-1})$ to $(0,\infty)$, which we compose with the $r_0(\beta)$ just obtained to get a function $K_0(\beta)$ that is an increasing diffeomorphism from $(\alpha e,\infty)$ to $(0,\infty)$. The $K_0(\beta)$ and $r_0(\beta)$ thus obtained satisfy ($1''$) and hence (1), finishing the first step of the proof.

For (2), we notice that the asymptotic behavior of $K_0$ and $r_0$ as in \eqref{eqn:asymptotics} is proved in the previous paragraph. By the monotonicity of $u_{\beta,K_0(\beta)}(r)$ as a function of $r$ and \eqref{1}, we have
\begin{equation*}
	\inf_{r\in [0,r_0(\beta)]}u_{\beta,K_0(\beta)}(r)= u_{\beta,K_0(\beta)}(r_0(\beta))= \frac{\alpha^2}{(r_0(\beta)^2 (\log r_0(\beta))^2)}\to \infty
\end{equation*}
when $\beta\to \infty$, which is \eqref{eqn:ubetaK0}.

An easy observation from \eqref{eqn:betar0} is that
\begin{equation*}
	\beta=\frac{2\alpha}{r_0(-\log r_0+1)} \leq \frac{2\alpha}{r_0},
\end{equation*}
which gives $r_0\leq \frac{2\alpha}{\beta}$. By the monotonicity of \eqref{eqn:K0r0}, we obtain \eqref{K0_lower}.
\end{proof}

Now, we return to the proof of Theorem \ref{mainthm}. For any $c_1$ larger than $32$ as in Theorem \ref{mainthm}, we  choose any 
$\mu<\frac{1}{8}$ and any $\al>1$ so that 
\begin{equation}
\label{choice_of_constants}
\frac{1}{32}>\frac{2 \mu^2}{\al}> \frac{1}{c_1}.
\end{equation}
Then we can pick $t_0>0$ such that for all $0<t<t_0$,
we have
\begin{equation}
	1+2t<\alpha^2,
	\label{eqn:talpha}
\end{equation}
\begin{equation}
\label{eqn:higher}
v_{cc}(0,t)\geq \frac{1}{8t}+\frac{1}{2}(1+\log 4t)> \frac{\mu}{t},
\end{equation}
and
\begin{equation}
	\frac{2}{\alpha}\left[ \left(\frac{\mu}{t} -\log 2\alpha\right)^2 -1 \right]>\frac{1}{c_1 t^2}.
	\label{eqn:smallt}
\end{equation}
where we have used Theorem \ref{secondthm} in \eqref{eqn:higher}.
We claim that 
\begin{equation*}
	\max_B K_{cc}(t)>\frac{1}{c_1 t^2},
\end{equation*}
for all $t\in (0,t_0)$,
which would conclude the proof of Theorem \ref{mainthm}.

To see the claim is true for a given $t\in (0,t_0)$, 
consider the  family $w_\beta$ of functions defined for 
$\beta>\alpha e$ by
\begin{equation*}
	w_\beta(r)=\left\{
		\begin{array}[]{ll}
			u_{\beta,K_0}(r) & \quad 0\leq r<r_0\\
			\alpha^2 h(r) & \quad  r_0\leq r<1,
		\end{array}
		\right.
\end{equation*}
where $K_0$ and $r_0$ are given in Lemma \ref{lem:touch2}.
Each value $w_\be(r)$ will vary continuously in $\be$ by 
Lemma \ref{lem:touch2}.
%
%
By \eqref{eqn:talpha} and the fact that $u_{cc}(t)\leq (1+2t) h$,
by \eqref{eqn:belowcusp}, we have  
\begin{equation}\label{eqn:smaller}
	u_{cc}(t)< \alpha^2 h.
\end{equation}
Therefore, by construction of $w_\be$, and by \eqref{eqn:ubetaK0} of Lemma \ref{ubetaK0_lemma}, we have
$w_\beta(r)> u_{cc}(r,t)$ for large enough $\beta$. 

We now reduce $\be$ from such a large value until the largest $\be$ for which this fails, i.e. so that $w_\beta(r)\geq  u_{cc}(r,t)$, with equality for some $r_1\in [0,1)$.
By the definition of $w_\be$ and  \eqref{eqn:higher}
we then have
$$\be^2=w_\beta(0)\geq u_{cc}(0,t)\geq e^{\frac{2\mu}{t}},$$
and in particular, 
\begin{equation}
	\log \beta\geq \frac{\mu}{t}. 
	\label{eqn:betalarge}
\end{equation}

By \eqref{eqn:smaller} and the definition of $w_\beta$, we know $r_1<r_0$, where $r_0=r_0(\beta)$ is given in Lemma \ref{lem:touch2}. Because $u_{cc}(t)$ and $w_\beta(r)=u_{\beta,K_0}(r)$ are two smooth functions in a small neighbourhood of $r_1$ and $u_{cc}$ touches $u_{\beta,K_0}$ from below at $r_1$, we deduce that
$K_{cc}(r_1,t)\geq K_0$. Finally, we use (3) of Lemma \ref{lem:touch2}, \eqref{eqn:betalarge} and \eqref{eqn:smallt} to conclude that
\begin{equation*}
	\max_B K_{cc}(t)\geq K_0 \geq \frac{2}{\alpha}\left[ (\log\beta -\log 2\alpha)^2-1 \right] >\frac{1}{c_1 t^2},
\end{equation*}
for each $t\in (0,t_0)$, completing the proof of Theorem \ref{mainthm}.

\vskip 1cm

\emph{Acknowledgements:} The first author was supported by EPSRC Programme grant number EP/K00865X/1 and the second author was supported by NSFC 11471300.

\appendix
\section{Appendix: Proofs of Lemmas}
\subsection{Proof of Lemma \ref{lem:touch}}
\begin{proof}
	By (\ref{eqn:delta}) and (\ref{eqn:varepsilon}), it suffices to show that
\begin{equation*}
	r^2 (\log r)^2 \leq (-\log r_0) \left( r_0^2 + (-\log r_0-1) r^2 \right),
\end{equation*}
or equivalently that 
$F(r):=r^2 (\log r)^2 - (-\log r_0) \left( r_0^2 + (-\log r_0-1) r^2 \right)\leq 0$,
with equality if and only if $r=r_0$. Equality at $r=r_0$ is clear. 
We compute
$$F'(r)=\left[2r((-\log r)+(-\log r_0)-1)\right]((-\log r)-(-\log r_0)),$$
and because $0<r_0<1/e$, the part in square brackets is positive, and we see that
$F'(r)<0$ for $0<r<r_0$, while $F'(r)>0$ for $r_0<r<1$, which is enough to conclude
that $F(r)<0$ for $0<r<r_0$ and $r_0<r<1$.
\end{proof}

\subsection{Proof of Lemma \ref{lem:increase}}
\begin{proof}
When $r=\abs{\bx}>r_0$, $u_{r_0}(\bx)=h(\bx)$ does not depend on $r_0$ at all, and is hence trivially decreasing. For $\abs{\bx}<r_0$, we use \eqref{eqn:ur0}, \eqref{eqn:delta} and \eqref{eqn:varepsilon} to compute
\begin{eqnarray*}
	\frac{\partial}{\partial r_0} (u_{r_0}^{-1})&=& \frac{\partial}{\partial r_0}\left( -r_0^2 \log r_0 + r^2 (\log r_0)^2 + r^2 \log r_0 \right) \\
	&=& -2r_0\log r_0 -r_0 + \frac{r^2}{r_0} (2\log r_0) + \frac{r^2}{r_0} \\
	&=& \frac{1}{r_0}\left( -2 r_0^2 \log r_0 -r_0^2 + 2 r^2 \log r_0 +r^2 \right) \\
	&=& \frac{1}{r_0} (r_0^2-r^2) (-2\log r_0 -1),
\end{eqnarray*}
which is positive since $r_0<\frac{1}{e}$.
\end{proof}

\subsection{Proof of Lemma \ref{lem:Fn}}
\begin{proof}
To prove \eqref{eqn:special}, we compute
\begin{equation*}
\frac{1}{2}\log h = - \log \abs{r\log r}
\end{equation*}
and
\begin{equation*}
\begin{aligned}
	h^{-1} \abs{\nabla \log \left( \frac{1}{2}\log h \right)}^2 &=   \frac{r^2(\log r)^2 }{ (\log \abs{r\log r})^2} \frac{(\log r +1)^2}{\abs{r\log r}^2}\\
	&= \frac{(\log r +1)^2 }{ (\log \abs{r\log r})^2}.
\end{aligned}
\end{equation*}
It is not hard to see that the limit of the above quantity is $1$ as $r\downto 0$ and $0$ as $r\upto 1$. It is therefore bounded, by continuity, as required for \eqref{eqn:special}.

Because $u_n=h$ for $r\in [r_n,1)$, we see that \eqref{Fn_ineq} holds for this range of values of $r$, by virtue of \eqref{eqn:special}.

Having dealt with the hyperbolic cusp part, i.e. for $r\in [r_n,1)$, it 
remains to verify \eqref{Fn_ineq} for $r< r_n$, i.e. on the cigar part where
\begin{equation*}
	u_n=\frac{\varepsilon}{\delta+r^2}
\end{equation*}
and hence
\begin{equation*}
	v_n=\frac{1}{2}\log u_n=\frac{1}{2}\left( \log \varepsilon -\log(\delta+r^2) \right).
\end{equation*}
It suffices then to show that $F_n$ is an increasing function of $r\in (0,r_n]$, since we have already established the bound for $r=r_n$.
We compute
\begin{equation*}
	\abs{\nabla f_n}^2 =\frac{\abs{\nabla v_n}^2}{v_n^2} = \frac{r^2}{(\delta+r^2)^2}\frac{4}{(\log \varepsilon-\log(\delta+r^2))^2}
\end{equation*}
and thus
\begin{equation}\label{eqn:Fn}
	F_n=u_n^{-1} \abs{\nabla f_n}^2 =\frac{1}{\varepsilon} \frac{1}{(1+\frac{\delta}{r^2})}\frac{4}{(\log \varepsilon-\log(\delta+r^2))^2}.
\end{equation}
By 
\eqref{eqn:varepsilon},
we have
\begin{equation*}
	\delta+r^2\leq \delta+r_n^2 =\ep r_n^2(-\log r_n)^2\leq \frac{\ep}{e^2}< \varepsilon,
\end{equation*}
which together with \eqref{eqn:Fn} implies that $F_n$ is an increasing function of $r$, as required. 
\end{proof}

{\sc mathematics institute, university of warwick, coventry, CV4 7AL,
uk}\\
\url{http://www.warwick.ac.uk/~maseq}

{\sc School of mathematical sciences, university of science and technology of China, Hefei, 230026, China}

\end{document}